\documentclass[12pt]{amsart}
\usepackage{amsmath, amssymb, euscript, hyperref, graphicx}

\setlength{\oddsidemargin}{0.25 in}
\setlength{\evensidemargin}{0.25 in}
\setlength{\textwidth}{6 in}


\newtheorem{thm}{Theorem}[section]
\newtheorem{lemma}[thm]{Lemma}
\newtheorem{prop}[thm]{Proposition}
\newtheorem{cor}[thm]{Corollary}

\newtheorem*{lemma:lemma_main}{Lemma \ref{lemma_main}}
\newtheorem*{thm:thm_main}{Theorem \ref{thm_main}}
\newtheorem*{cor:cor_main}{Corollary \ref{cor_main}}
\newtheorem*{thm:thm_also_1}{Theorem \ref{thm_also_1}}
\newtheorem*{thm:thm_also_2}{Theorem \ref{thm_also_2}}
\newtheorem*{cor:cor_also}{Corollary \ref{cor_also}}

\theoremstyle{definition}

\newtheorem{rem}[thm]{Remark}

\newcommand{\Homeo}{\mathop{\rm Homeo}}
\newcommand{\Diff}{\mathop{\rm Diff}}
\newcommand{\PL}{\mathop{\rm PL}}
\newcommand{\PLF}{\mathop{\rm PLF}}

\bibliographystyle{amsplain}

\begin{document}

\author{Michael P. Cohen}
\address{Michael P. Cohen,
Department of Mathematics,
North Dakota State University,
PO Box 6050,
Fargo, ND, 58108-6050}
\email{michael.cohen@ndsu.edu}

\author{Robert R. Kallman}
\address{Robert R. Kallman,
Department of Mathematics,
University of North Texas,
1155 Union Circle 311430,
Denton, TX, 76203-5017}
\email{kallman@unt.edu}

\title{$\PL_+(I)$ is not a Polish group}

\thanks{\textbf{Acknowledgements.}  The first author would like to thank Azer Akhmedov for many motivating discussions and remarks.}

\maketitle


\begin{abstract}  The group $\PL_+(I)$ of increasing piecewise linear self-homeomorphisms of the interval $I=[0,1]$ may not be assigned a topology in such a way that it becomes a Polish group.  The same statement holds for the groups $\Homeo_+^{Lip}(I)$ of bi-Lipschitz homeomorphisms of $I$, and $\Diff_+^{1+\epsilon}(I)$ of diffeomorphisms of $I$ whose derivatives are H\"older continuous with exponent $\epsilon$.
\end{abstract}

\section{Introduction}

Topologization is a fundamental tool in the study of transformation groups.  An abstract group is a purely algebraic object; a topological group, however, especially one equipped with a natural continuous action on some topological space, is at once an algebraic, a topological, and a dynamical object, and we may pursue an understanding of its properties via a powerful synthesis of these differing vantage points.  Thus researchers have equipped the lion's share of uncountable groups across all branches of mathematics with rich and interesting topologies to help facilitate their study.

We are particularly invested in the case where a group may be associated with a group topology which is \textit{Polish} (completely metrizable and separable).  The researcher of a Polish group comes armed with an arsenal of powerful tools, not least of which are the Baire category theorem and the vast body of descriptive-set-theoretic literature devoted to Polish group actions (see, among many others, \cite{becker_kechris_1996a}, \cite{hjorth_2000a}, \cite{gao_2009a}, \cite{pestov_2006a}).  A particularly interesting discovery of the last few decades is that many well-studied Polish groups actually have a \textit{unique} Polish topology, and therefore their topological structure is no contrivance but rather an intrinsic structure wholly determined by the group's algebraic definition.

Thus we are motivated by the foundational question:  Given an abstract group $G$ of cardinality continuum, may $G$ be assigned a topology which makes $G$ into a Polish group?  We consider this question in the context of the dynamics of $1$-dimensional manifolds.  Recall that $\PL_+(I)$ denotes the group of piecewise-linear increasing homeomorphisms of $I$ onto itself, which have only finitely many changes of slope.  There have been a large number of papers (see \cite{brin_squier_2001a}, \cite{bleak_2008a} for some important examples) devoted purely to the algebraic structure of $\PL_+(I)$.  These studies are largely motivated by a program to fully understand the properties of Thompson's group $F$, which embeds naturally as a finitely generated subgroup of $\PL_+(I)$, and shares many similarities with its host group.  The considerations of this paper stem from a desire to better understand the topological-algebraic structure of $\PL_+(I)$.  Namely, we prove the titular result.

\begin{thm} \label{thm_main}  There is no Polish group topology on $\PL_+(I)$.
\end{thm}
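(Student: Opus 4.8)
The plan is to assume, toward a contradiction, that $\PL_+(I)$ carries a Polish group topology $\tau$, and to derive a violation of the Baire category theorem by combining the homogeneity of $\tau$ (left translations and conjugations are $\tau$-homeomorphisms) with the rigid internal structure of $\PL_+(I)$. The first step is to isolate that structure algebraically, independently of $\tau$. Writing $\lambda(f)=(\log f'(0^+),\log f'(1^-))$ defines a homomorphism $\lambda\colon \PL_+(I)\to\mathbb{R}^2$ whose kernel $B$ is the subgroup of elements that are the identity on a neighborhood of $\{0,1\}$, i.e. those with support compact in $(0,1)$. I would first record the standard facts that $B$ is a simple group, that $B=[\PL_+(I),\PL_+(I)]$, and hence that $\PL_+(I)^{\mathrm{ab}}\cong\mathbb{R}^2$. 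I would then exhibit $B=\bigcup_n B_n$ as an increasing union, where $B_n$ consists of the elements supported in $[\tfrac1{n+1},\tfrac{n}{n+1}]$; each $B_n$ is isomorphic to $\PL_+$ of a subinterval, each inclusion $B_n\subsetneq B_{n+1}$ is proper and of infinite index, and, crucially, all the $B_n$ are conjugate to one another by elements of $\PL_+(I)$ that spread one subinterval onto another.

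The category step would run as follows. For any increasing union $H=\bigcup_n H_n$ of subgroups of a Polish group, the Baire category theorem forces some $H_n$ to be non-meager, and the closure of a non-meager subgroup, being closed and non-meager, has non-empty interior and is therefore an open (hence clopen, countable-index) subgroup. Applying this inside the closed subgroup $\overline{B}$ (after disposing of the abelian quotient $\mathbb{R}^2$), I would obtain some $B_n$ whose closure $\overline{B_n}$ is open. Because every conjugation is a $\tau$-homeomorphism and the $B_m$ are mutually conjugate, non-meagerness and openness of closures would propagate simultaneously to all $B_m$; the same conjugators realize the self-similarity $B_n\subsetneq cB_nc^{-1}=B_{n+1}$. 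The goal is to play this self-similar, infinite-index tower against the countable-index constraint coming from openness, and against the algebraic rigidity of $\PL_+(I)$, in particular the fact (provable from the simplicity of $B$ and the divisibility of $\mathbb{R}^2$) that $\PL_+(I)$ has no proper subgroup of finite index, so as to collapse every $\overline{B_n}$ to a single open subgroup and contradict the strictness of the tower.

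The hard part, and the reason the result is not routine, is that $\tau$ is not given: we may not assume that $\lambda$, the action on $I$, or the length function $f\mapsto\sup_x|\log f'(x)|$ is $\tau$-continuous or even $\tau$-Baire-measurable, so we cannot simply compute closures, indices, or Pettis neighborhoods. That the naive structural features are by themselves insufficient is shown by two cautionary examples sharing them yet being Polish: $(\mathbb{R},+)$ carries an unbounded length function with non-meager balls, and $\mathbb{Q}_p=\bigcup_n p^{-n}\mathbb{Z}_p$ is an increasing union of proper open subgroups admitting a self-similar dilation automorphism. The essential point distinguishing $\PL_+(I)$ is that its tower $B_n\subsetneq B_{n+1}$ has \emph{infinite-index} steps, with pieces $B_n\cong\PL_+$ that are as large as the whole group, whereas in $\mathbb{Q}_p$ the steps are of finite index with compact pieces.

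I therefore expect the decisive work to be packaged as a general lemma to the effect that a Polish group cannot be exhausted by an increasing tower of mutually conjugate proper subgroups whose successive indices are infinite (together with whatever auxiliary hypothesis — simplicity, or separability deployed as a Lindel\"of/countable-cover bound — is needed to exclude the $\mathbb{Q}_p$-type and $\mathbb{R}$-type phenomena). Once such a lemma is in hand, the theorem follows by verifying its hypotheses for the tower $(B_n)$ inside $\PL_+(I)$; and the same lemma should apply verbatim to the interior-support towers of $\Homeo_+^{Lip}(I)$ and $\Diff_+^{1+\epsilon}(I)$, yielding the companion results advertised in the abstract. The main obstacle I anticipate is proving this lemma: converting the purely algebraic and dynamical tower data into a genuine Baire-category contradiction valid in \emph{every} Polish topology, with no continuity assumption to lean on.
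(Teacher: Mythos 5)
Your proposal is a genuinely different strategy from the paper's (the paper filters $\PL_+(I)$ by the \emph{number of break points} rather than by support, uses symmetric Borel sets that are not subgroups, and derives the covering obstruction from a Baire category computation in an auxiliary Polish parametrization of $\PL_+(I)$ by break-point tuples), but as it stands it has two genuine gaps. The first and most serious is the one you yourself flag: since $\tau$ is arbitrary, you have no way to show that your subgroups $B_n$ (or the kernel $B$ of $\lambda$) have the Baire property in $(\PL_+(I),\tau)$, and without that neither the ``some $B_n$ is non-meager'' step nor Pettis's theorem can be invoked. This is not a technicality to be deferred; it is the crux of the problem, and the paper's entire Section 2 is devoted to closing exactly this hole. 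The resolution there is an automatic-continuity-type result (due to Arens, Kallman, and Hayes): for any subgroup $G\leq\Homeo_+(I)$ containing a nontrivial element supported in each nonempty open set, \emph{every} Hausdorff group topology on $G$ refines the compact-open topology. Combined with the Lusin--Souslin theorem, this forces the Borel structure of any Polish group topology on $\PL_+(I)$ to coincide with the one inherited from $\Homeo_+(I)$, so that any set you can verify to be Borel in $\Homeo_+(I)$ automatically has the Baire property in $\tau$. Your outline contains no substitute for this step, and without it the argument cannot start.

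The second gap is the unproven ``general tower lemma.'' As you state it (a Polish group cannot be an increasing union of mutually conjugate proper subgroups with infinite-index steps), it is false: any countable group with this algebraic structure, e.g. the subgroup of compactly supported elements of Thompson's group $F$ filtered by support, is Polish in the discrete topology, where every subgroup is open. So some Baire-property and non-meagerness input is indispensable, and the auxiliary hypothesis you leave unspecified is doing all the work. If the Baire-property issue were resolved, the non-meager branch of your argument would in fact close: a non-meager $B_n$ with the Baire property is open by Pettis, hence of countable index, whereas the index of your $B_n$ in $B_{n+1}$ is already continuum (two elements of $B_{n+1}$ lie in the same coset only if they agree on the collar $[\tfrac{1}{n+2},\tfrac{1}{n+1}]\cup[\tfrac{n}{n+1},\tfrac{n+1}{n+2}]$, and there are continuum many such restrictions). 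But the complementary branch, in which every $B_n$ is meager, is not vacuous and is unaddressed: a meager Borel normal subgroup of a Polish group can have an abstract quotient isomorphic to $\mathbb{R}^2$ (compare $\mathbb{R}/\mathbb{Q}$), so you cannot dismiss it by pointing at the abelianization. The paper avoids this dichotomy entirely: its sets $B_n=\{f:\#B(f)\leq n\}$ are symmetric with $B_n^2\subseteq B_{2n}$ but are not subgroups, and the contradiction comes from showing that no countable family of translates of any $B_n$ can cover $\PL_+(I)$, because for the natural Polish topology on the set $A_{n+1}$ of maps with exactly $n+1$ break points, each translate $gB_n$ meets $A_{n+1}$ in a meager set.
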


The tools used for establishing Theorem \ref{thm_main} may be of independent interest to researchers of groups of one-dimensional piecewise-linear maps.  Although $\PL_+(I)$ has no Polish group topology, we are able to regard $\PL_+(I)$ as a Polish topological \textit{space} (with discontinuous group multiplication) by identifying each map $f$ with its ordered $n$-tuple of break points $B(f)$ in $[(0,1)\times(0,1)]^n$.  This allows us to make several Baire category statements about the group which are central to the proof.  We expect that many other interesting Baire category statements about $\PL_+(I)$ can be formulated and proved.

The closely related corollary below follows with a brief proof.

\begin{cor} \label{cor_main}  There is no Polish group topology on any of the following groups:
\begin{enumerate}
		\item $\PL_+(\mathbb{R})$, the group of increasing piecewise-linear self-homeomorphisms of $\mathbb{R}$ whose set of break points is discrete in $\mathbb{R}$;
		\item $\PLF_+(\mathbb{R})$, the group of increasing piecewise-linear self-homeomorphisms of $\mathbb{R}$ whose set of break points is finite in $\mathbb{R}$; and
		\item $\PL_+(\mathbb{S}^1)$, the group of orientation-preserving piecewise-linear self-homeomorphisms of the circle $\mathbb{S}^1$.
\end{enumerate}
\end{cor}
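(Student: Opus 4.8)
The plan is to reduce each case to Theorem \ref{thm_main} by exhibiting, inside each of the three groups $G$, a subgroup $H$ that is abstractly isomorphic to $\PL_+(I)$ and that is closed with respect to \emph{every} Hausdorff group topology on $G$. Granting this, if $G$ carried a Polish group topology then $H$, being a closed subgroup of a Polish group, would itself be Polish; transporting this topology across the isomorphism $H \cong \PL_+(I)$ would equip $\PL_+(I)$ with a Polish group topology, contradicting Theorem \ref{thm_main}. The delicate point is that we are given no control over the topology of $G$ — in particular we may not assume that the evaluation maps $f \mapsto f(x)$ are continuous — so the subgroups $H$ must be shown closed by purely algebraic means.

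The key observation is that \emph{centralizers are automatically closed}: in any Hausdorff topological group, $g \mapsto hgh^{-1}$ is continuous and points are closed, so $C_G(k) = \{g : gk = kg\}$ is closed for each $k$, and hence so is $C_G(K) = \bigcap_{k \in K} C_G(k)$ for any subset $K \subseteq G$. I would therefore realize each copy of $\PL_+(I)$ as such a centralizer. For $G = \PL_+(\mathbb{R})$ or $G = \PLF_+(\mathbb{R})$, let $H$ be the group of maps supported in $[0,1]$ (the identity off $(0,1)$), which is patently isomorphic to $\PL_+(I)$, and let $K$ be the set of elements of $G$ supported in the complement $\mathbb{R} \setminus [0,1]$. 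For $G = \PL_+(\mathbb{S}^1)$, fix a proper closed arc $A \subsetneq \mathbb{S}^1$, let $H$ be the maps supported in $A$ (again isomorphic to $\PL_+(I)$ after cutting $A$ open), and let $K$ be the maps supported in the complementary arc. In every case $H$ and $K$ have disjoint supports (meeting at most at shared boundary points, where both are the identity), so $H \subseteq C_G(K)$; the content is the reverse inclusion.

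The reverse inclusion — and the main obstacle — is a support-rigidity (reconstruction) computation carried out entirely in terms of the standard action on the manifold, independent of any topology on $G$. If $f$ commutes with every $k \in K$ then $f(\mathrm{supp}\,k) = \mathrm{supp}\,k$ for all $k$; since the supports of elements of $K$ exhaust the open region $U$ complementary to $A$ (resp.\ to $[0,1]$), this forces $f$ to preserve $U$ and to fix its endpoints, and then $f|_U$ commutes with enough PL homeomorphisms of $U$ (for instance all compactly supported ones) to force $f|_U = \mathrm{id}$, since such a family has trivial centralizer in $\Homeo(U)$. Hence $f|_{\overline{U}} = \mathrm{id}$, i.e.\ $f \in H$, so $C_G(K) = H \cong \PL_+(I)$ is closed, completing the reduction. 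The one point requiring care is choosing $K$ inside $\PLF_+(\mathbb{R})$ (take the compactly supported maps in each complementary ray) so that $K \subseteq G$ while its supports still exhaust $U$.
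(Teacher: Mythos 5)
Your proposal is correct and follows essentially the same route as the paper: both realize $\PL_+(I)$ inside each group $G$ as the centralizer of the set of elements supported off a fixed interval, note that centralizers are closed in any Hausdorff topological group, and conclude via Theorem \ref{thm_main} that $G$ cannot be Polish. Your extra care in choosing the centralizing family $K$ inside $\PLF_+(\mathbb{R})$ (compactly supported maps on the complementary rays) is a worthwhile refinement of a point the paper glosses over, but it does not change the argument.
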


Concretely, in the above we identify the circle $\mathbb{S}^1$ with the quotient space $\mathbb{R}/\mathbb{Z}$, and define $\PL_+(\mathbb{S}^1)$ to be the subgroup of $\PL_+(\mathbb{R})$ consisting of homeomorphisms $f$ which satisfy the equation $f(x+1)=f(x)+1$ for every $x\in\mathbb{R}$, i.e. which induce a homeomorphism of $\mathbb{R}/\mathbb{Z}$.

While we are at it, we apply the machinery of this paper to study the class of compatible topologies on several other well-studied groups of interval transformations.  Consider the following chain of subgroups:\\

\begin{center} $\Homeo_+(I)\geq\Diff_+(I)\geq\Diff_+^2(I)\geq...\geq\Diff_+^k(I)\geq...\geq\Diff_+^\infty(I)$
\end{center}
\vspace{.3cm}

These denote, listed in order, groups of homeomorphisms of the interval $I=[0,1]$: the full homeomorphism group, then the group of $C^1$-diffeomorphisms, followed by $C^2$-diffeomorphisms, $C^k$-diffeomorphisms $(k\geq 2)$, and lastly the infinitely differentiable diffeomorphisms.  Each of these groups is a Polish group when metrized by a suitable metric, and each topology is strictly finer than the preceding one.  The dynamics of these groups and their countable subgroups is a major focus of classical study.

We may think of the groups above as representing ``integer-valued'' degrees of regularity.  In recent decades, there has been blooming interest in groups of interval homeomorphisms which satisfy some form of ``in-between'' regularity: for an example, the group $\Homeo_+^{abs}(I)$ of homeomorphisms of $I$ which are absolutely continuous and have absolutely continuous inverse lies strictly between $\Homeo_+(I)$ and $\Diff_+(I)$ as a group.  Very interestingly, Solecki \cite{solecki_1999a} has shown that $\Homeo_+^{abs}(I)$ may be equipped with a Polish group topology strictly finer than that of $\Homeo_+(I)$ but strictly coarser than that of $\Diff_+(I)$.  Consider now a few other important ``in-between'' groups:\\

\begin{center} $\Homeo^{Lip}_+(I)=$ the group of increasing bi-Lipschitz self-homeomorphisms of $I$;\\
\vspace{.3cm}
$\Diff^{1+\epsilon}_+(I)=$ the group of increasing self-homeomorphisms of $I$ which are differentiable with differentiable inverse, and whose derivatives are H\"older continuous with exponent $0<\epsilon<1$.
\end{center}
\vspace{.3cm}

The first group is of a regularity class less than $C^1$, while the second lies strictly between $C^1$- and $C^2$-regularity.  The dynamical properties of these groups and their countable subgroups have received considerable attention in the literature---for some overview of facts and open problems we refer the reader to the well-known reference \cite{navas_2011a} or the recent survey article \cite{akhmedov_2013a}.  Our results indicate that these groups are fundamentally unlike those we have previously listed; they prohibit Polish topologization entirely.

\begin{thm} \label{thm_also_1}  There is no Polish group topology on $\Homeo^{Lip}_+(I)$.
\end{thm}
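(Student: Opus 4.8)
The plan is to argue by contradiction, re-running the mechanism that proves Theorem~\ref{thm_main} but with a new \emph{escape invariant}. Observe first that every element of $\PL_+(I)$ is bi-Lipschitz, so $\PL_+(I)\le\Homeo^{Lip}_+(I)$; nevertheless the sequences that escape $\PL_+(I)$ (uniform limits of bounded-slope piecewise-linear bumps accumulating infinitely many break points) remain bi-Lipschitz, and hence do \emph{not} escape $\Homeo^{Lip}_+(I)$. A genuinely different invariant is therefore required, and the natural choice is the bi-Lipschitz constant $L(f)=\max\{\mathrm{Lip}(f),\mathrm{Lip}(f^{-1})\}$, which satisfies $L(\mathrm{id})=1$, $L(f)=L(f^{-1})$, and $L(fg)\le L(f)L(g)$. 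An honest increasing homeomorphism $h$ of $I$ fails to be bi-Lipschitz precisely when its local slopes are unbounded (or tend to $0$); this is the failure we will manufacture in a limit.

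So suppose $\tau$ is a Polish group topology on $G=\Homeo^{Lip}_+(I)$, with a compatible complete metric. Write $G=\bigcup_{n\ge 1}G_n$, where $G_n=\{f\in G: L(f)\le n\}$. The $G_n$ are increasing and symmetric, contain the identity, and satisfy $G_nG_m\subseteq G_{nm}$. Since a Polish space is Baire and a countable union of meager sets is meager, some $G_N$ is $\tau$-non-meager. Granting that $G_N$ has the Baire property, Pettis's theorem gives a $\tau$-neighborhood $V$ of the identity with $V\subseteq G_NG_N^{-1}\subseteq G_{N^2}$; that is, the bi-Lipschitz constant is \emph{bounded by $N^2$ on an entire $\tau$-neighborhood of the identity}.

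Against this I will produce elements of arbitrarily large bi-Lipschitz constant inside every $\tau$-neighborhood of the identity. Fix disjoint closed intervals $J_k\subset(0,1)$ shrinking to a point $p$, and let $P_k\le G$ be the subgroup of maps supported in $J_k$; distinct $P_k$ commute. Using Lemma~\ref{lemma_main} together with the non-meagerness obtained above, I extract nontrivial bumps $g_k\in P_k$ with $g_k\to\mathrm{id}$ in $\tau$ yet $L(g_k)\to\infty$, chosen to converge quickly enough that the partial products $\pi_K=g_1\cdots g_K$ form a $\tau$-Cauchy sequence. Then the tails $\pi_K^{-1}\pi_{K'}=g_{K+1}\cdots g_{K'}$ are $\tau$-close to the identity for $K,K'$ large, while each carries the large bi-Lipschitz constant of the $g_j$ it contains; hence every $\tau$-neighborhood of the identity meets $\{f:L(f)\ge M\}$ for every $M$. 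Taking $M=N^2+1$ contradicts the conclusion of the previous paragraph. (Alternatively one runs this as a single escape: by completeness $\pi_K$ has a $\tau$-limit $g\in G$, while the disjoint shrinking supports force $\pi_K$ to converge uniformly to a homeomorphism $h$ whose slopes near $p$ are unbounded, so $h\notin G$; identifying the two limits yields the same contradiction once $\tau$ is known to refine uniform convergence.)

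The main obstacle is the extraction in the third paragraph: because $\tau$ is an abstract and possibly pathological topology, one cannot assume a priori that $L$ is $\tau$-Borel or $\tau$-continuous, nor that the layers $G_n$ carry the Baire property needed to invoke Pettis. The whole force of Lemma~\ref{lemma_main} and of the break-point parametrization developed for Theorem~\ref{thm_main} is to bridge exactly this gap---to show that no Polish group topology can render the (lower semicontinuous, unbounded) invariant $L$ locally bounded near the identity, equivalently that $\tau$-small bumps of unboundedly large distortion must exist. Once this is in hand, the same argument should prove Theorem~\ref{thm_also_2} essentially verbatim, with $L$ replaced by the H\"older seminorm of the derivative as the escape invariant, indicating that the obstruction is driven by the regularity class rather than by the piecewise-linear structure.
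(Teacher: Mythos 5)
Your setup is the right one and matches the paper's framework: decompose $G=\Homeo^{Lip}_+(I)$ into the symmetric layers $B_n=\{f: f \text{ and } f^{-1} \text{ are } n\text{-Lipschitz}\}$, note $B_n^2\subseteq B_{n^2}$, check the $B_n$ are Borel for the inherited structure (the paper does this via a countable conjunction over rational pairs, making each $B_n$ a $G_\delta$), and apply Pettis to conclude that some $B_{N^2}$ contains a $\tau$-open neighborhood $V$ of the identity. This is exactly Lemma \ref{lemma_standard}. But the way you propose to derive a contradiction from that point has a genuine gap. You want to extract bumps $g_k$ with $g_k\to e$ in $\tau$ and $L(g_k)\to\infty$, and then form $\tau$-Cauchy partial products. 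Nothing available to you can produce such a sequence: Lemma \ref{lemma_hayes} bounds $\tau$ only from \emph{below} (it contains the compact-open topology), so uniform smallness of a bump gives no information about its $\tau$-size, and an abstract Polish $\tau$ could in principle make every nontrivial element ``far'' from the identity in any particular countable family you write down. Citing Lemma \ref{lemma_main} and ``the break-point parametrization'' does not bridge this: Lemma \ref{lemma_main} only converts the problem into a Polishability statement, and the break-point machinery is specific to $\PL_+(I)$ and plays no role for $\Homeo^{Lip}_+(I)$. Your own final paragraph concedes that this extraction is the crux, but the proof never supplies it.

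The paper closes the gap by going in the opposite direction from the open set $V\subseteq B_{N^2}$: second countability gives countably many translates $g_kB_{N^2}$ covering $G$, and the contradiction is an explicit, topology-free diagonal construction refuting any such countable cover (condition (4) of Lemma \ref{lemma_standard}). Concretely, one fixes disjoint intervals $J_k=(a_k,b_k)$ and, on each $J_k$ where $g_k$ and $g_k^{-1}$ happen to be $n$-Lipschitz, inserts a single-break piecewise-linear bump sending $a_k+\ell_k/(n^2+1)$ to $b_k-\ell_k/(n^2+1)$; the resulting $f$ is globally $(n^2+1)$-bi-Lipschitz, yet the composition $fg_k^{-1}$ has a difference quotient exceeding $n$ on $g_k(J_k)$ for every $k$, so $f\notin\bigcup_k B_ng_k$. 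This single concrete homeomorphism, calibrated against the entire sequence $(g_k)$ at once, is the content your proposal is missing; without it (or an equivalent), the argument does not go through.
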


\begin{thm} \label{thm_also_2}  There is no Polish group topology on $\Diff^{1+\epsilon}(I)$.
\end{thm}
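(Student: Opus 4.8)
The plan is to prove Theorem \ref{thm_also_2} in exact parallel with Theorem \ref{thm_also_1}, substituting a $C^{1+\epsilon}$ ``size'' for the bi-Lipschitz constant. For $f\in\Diff^{1+\epsilon}_+(I)$ let $[h]_\epsilon=\sup_{x\neq y}|h(x)-h(y)|/|x-y|^\epsilon$ denote the H\"older-$\epsilon$ seminorm, and set
\[
N(f)=\max\bigl\{\|f'\|_\infty,\ \|(f^{-1})'\|_\infty,\ [f']_\epsilon,\ [(f^{-1})']_\epsilon\bigr\}.
\]
Putting $A_n=\{f:N(f)\le n\}$, the family $\{A_n\}$ is symmetric (since $N(f^{-1})=N(f)$), increasing, and exhausts the group: every $f\in\Diff^{1+\epsilon}_+(I)$ has, by definition, bounded derivatives $f',(f^{-1})'$ with finite H\"older-$\epsilon$ seminorms, so $N(f)<\infty$. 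Hence $\Diff^{1+\epsilon}_+(I)=\bigcup_n A_n$.

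First I would verify the two structural properties that the machinery consumes. \emph{Compactness:} each $A_n$ is compact in the $C^1$ topology. The derivatives $\{f':f\in A_n\}$ are uniformly bounded and uniformly H\"older, hence equicontinuous, so Arzel\`a--Ascoli makes $A_n$ precompact in $C^1$; moreover the four bounds are preserved under $C^1$-limits, and the estimate $f'\ge 1/n$ forced by $\|(f^{-1})'\|_\infty\le n$ keeps the limit a diffeomorphism with H\"older derivative, so $A_n$ is in fact $C^1$-closed, hence compact. Consequently $\Diff^{1+\epsilon}_+(I)$ sits as a $K_\sigma$ subset of the Polish group $\Diff^1_+(I)$, just as $\Homeo^{Lip}_+(I)$ sits as a $K_\sigma$ subset of $\Homeo_+(I)$ in Theorem \ref{thm_also_1}. \emph{Sub-multiplicativity:} for $f,g\in A_n$, the chain rule $(f\circ g)'=(f'\circ g)\,g'$ together with the elementary bound
\[
[(f\circ g)']_\epsilon\le\|f'\|_\infty\,[g']_\epsilon+\|g'\|_\infty^{1+\epsilon}\,[f']_\epsilon
\]
(and its mirror image applied to $(f\circ g)^{-1}=g^{-1}\circ f^{-1}$) yields $A_nA_n\subseteq A_{h(n)}$ for the explicit choice $h(n)=\lceil 2n^{2+\epsilon}\rceil$.

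With the filtration in hand I would invoke the same Baire category argument used for Theorem \ref{thm_main} and Theorem \ref{thm_also_1}: assuming a Polish group topology $\tau$ on $\Diff^{1+\epsilon}_+(I)$, the decomposition $\bigcup_n A_n$, the compactness of the pieces, and the sub-multiplicative closure force some $A_N$ to be non-meager, whence a Pettis-type step pushes a fixed power of $A_N$ to contain a $\tau$-neighborhood of the identity; comparison with the concrete $C^1$-structure then produces the contradiction exactly as before.

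The main obstacle I anticipate is not the displayed composition estimate, which is a one-line computation, but the bridging step that relates the unknown topology $\tau$ to $C^1$-convergence---the $C^1$ analogue of whatever comparison underlies the $C^0$ (uniform) setting of Theorem \ref{thm_also_1}. This is exactly where the $K_\sigma$-in-$\Diff^1_+(I)$ presentation and the $C^1$-compactness of the $A_n$ earn their keep, since they let the category-and-Pettis argument be run inside the ambient Polish group $\Diff^1_+(I)$ rather than directly against $\tau$. The one genuinely $\epsilon$-specific point needing care is that a $C^1$-limit of elements of $A_n$ retains the H\"older bound on the \emph{inverse} derivative; granting this, the proof is formally identical to that of Theorem \ref{thm_also_1}.
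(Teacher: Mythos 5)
Your setup is sound and matches the paper's in spirit: the filtration $A_n$ by Hölder constants, the symmetry, the observation that the sets are Borel in the ambient group, and the sub-multiplicativity estimate $[(f\circ g)']_\epsilon\le\|f'\|_\infty[g']_\epsilon+\|g'\|_\infty^{1+\epsilon}[f']_\epsilon$ are all essentially what the paper uses (the paper additionally notes the nice trick that the $n$-Hölder condition alone forces $\|f'\|_\infty\le n+1$, since otherwise $f'>1$ everywhere, which is impossible for a self-diffeomorphism of $I$; this lets it avoid building $\|f'\|_\infty$ into the definition of the filtration). But there is a genuine gap at the decisive step. The contradiction in this method does not come from compactness or from ``comparison with the $C^1$-structure''; it comes from verifying condition (4) of Lemma \ref{lemma_standard}: for every $n$ and every sequence $(g_k)$ in the group, there is an $f$ in the group avoiding every translate $B_ng_k$. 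This is a purely algebraic/set-theoretic property of the filtration, it is where all the work lies, and your proposal never addresses it. Without it, knowing that some $A_N$ is non-meager and that $A_N^2$ contains a $\tau$-neighborhood of the identity yields no contradiction at all --- second countability then merely says countably many translates of $A_{h(N)}$ cover the group, which is exactly what condition (4) must rule out.

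Moreover, the claim that this step is ``formally identical'' to Theorem \ref{thm_also_1} is wrong at precisely the point where it matters. The diagonal witness in the Lipschitz case is a piecewise-linear tent-like map with a single break point in each interval $J_k$; that map is not differentiable and cannot serve here. The paper instead constructs the \emph{derivative} $f'$ as a piecewise-linear spike of height $1+(\ell_k)^\epsilon(n+1)^4$ and depth $1-(\ell_k)^\epsilon$ on each $J_k$, calibrated so that $\int_{J_k}f'=\ell_k$ (so $f$ preserves $J_k$), so that $f'$ is globally $C$-Hölder (which forces an extra separation hypothesis $\inf_{x\in J_k,y\in J_p}|x-y|\ge\max\{\ell_k,\ell_p\}$ on the intervals that has no analogue in the Lipschitz proof), and so that a quotient-rule estimate on $(fg_k^{-1})'=(f'/g_k')\circ g_k^{-1}$ shows the $n$-Hölder condition fails at the pair of points $g_k(a_k),g_k(x_k)$. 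None of this is routine bookkeeping, and your compactness observation, while correct, does not substitute for it: $C^1$-compactness of $A_n$ does not transfer to the unknown finer topology $\tau$, so you cannot conclude local compactness of $(G,\tau)$ or run the category argument ``inside $\Diff_+^1(I)$'' in place of verifying (4).
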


Again we get an easy but significant corollary.

\begin{cor} \label{cor_also}  There is no Polish group topology on either of the following groups:
\begin{enumerate}
		\item $\Homeo_+^{Lip}(\mathbb{R})$, the group of increasing bi-Lipschitz self-homeomorphisms of $\mathbb{R}$; and
		\item $\Homeo_+^{Lip}(\mathbb{S}^1)$, the group of orientation-preserving bi-Lipschitz self-homeomorphisms of $\mathbb{S}^1$.
\end{enumerate}
\end{cor}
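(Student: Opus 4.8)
The plan is to deduce both parts from Theorem \ref{thm_also_1} by exhibiting, inside each of the larger groups, a subgroup that is algebraically isomorphic to $\Homeo^{Lip}_+(I)$ and that is forced to be closed in \emph{every} group topology. The two standing facts I will use are entirely general: a closed subgroup of a Polish group is itself a Polish group in the subspace topology, and the centralizer $C_G(S)=\{g\in G: gs=sg \text{ for all } s\in S\}$ of an arbitrary subset $S$ of a topological group $G$ is closed, being the intersection over $s\in S$ of the closed sets $\{g: gsg^{-1}s^{-1}=e\}$. Consequently, if some centralizer $C_G(S)$ is isomorphic as an abstract group to a group carrying no Polish group topology, then $G$ can carry none either: a Polish group topology on $G$ would restrict to one on the closed subgroup $C_G(S)$, and transport along the isomorphism would contradict Theorem \ref{thm_also_1}.

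For part (1), I would reduce to part (2). Working in $\Homeo^{Lip}_+(\mathbb{R})$ and taking the translation $\tau(x)=x+1$ (an isometry, hence bi-Lipschitz), a direct computation shows $f\tau=\tau f$ if and only if $f(x+1)=f(x)+1$ for all $x$; that is, $C_{\Homeo^{Lip}_+(\mathbb{R})}(\tau)$ is precisely the subgroup of $1$-periodic maps, which is exactly $\Homeo^{Lip}_+(\mathbb{S}^1)$ under the identification $\mathbb{S}^1=\mathbb{R}/\mathbb{Z}$. Thus part (1) follows immediately from part (2) together with the general principle above.

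For part (2), I would fix two distinct points of $\mathbb{S}^1$, say $\bar 0$ and $\overline{1/2}$, cutting the circle into two open arcs $U$ and $V$, and let $G_{\overline V}$ denote the subgroup of maps that restrict to the identity on $\overline U$ (equivalently, supported in $\overline V$). The claim I must establish is the support/centralizer identity $C_G(G_{\overline V})=\{f\in G: f|_{\overline V}=\mathrm{id}\}\cong \Homeo^{Lip}_+(I)$, the last isomorphism coming from restriction to the closed arc $\overline U\cong I$, whose endpoints are fixed. The forward inclusion is easy: maps supported in $\overline U$ and maps supported in $\overline V$ overlap only at the two cut points, where both are the identity, so they commute. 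The content is the reverse inclusion, and this is the step I expect to be the main obstacle.

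To prove the reverse inclusion I would argue as follows. If $f$ commutes with every element of $G_{\overline V}$, then conjugation by $f$ fixes $G_{\overline V}$ pointwise, so $G_{f(\overline V)}=fG_{\overline V}f^{-1}=G_{\overline V}$; since the open arc $V$ is recoverable from $G_{\overline V}$ as its set of non-fixed points, we get $f(V)=V$, and an orientation-preserving homeomorphism preserving the arc $V$ must fix each of its endpoints $\bar 0,\overline{1/2}$. Hence $f$ restricts to an element of $\Homeo^{Lip}_+(\overline V)$ commuting with all of $\Homeo^{Lip}_+(\overline V)$, and since the center of $\Homeo^{Lip}_+(\overline V)\cong\Homeo^{Lip}_+(I)$ is trivial, $f|_{\overline V}=\mathrm{id}$. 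The delicate points are exactly the two classical inputs about homeomorphism groups of an interval---that the support arc can be read off algebraically from $G_{\overline V}$, and that the center of $\Homeo^{Lip}_+(I)$ is trivial---together with the bookkeeping needed to confirm that the bi-Lipschitz constraint never obstructs the auxiliary maps invoked, which it does not, since bi-Lipschitz bump homeomorphisms of an arc are plentiful. Granting the identity, any Polish group topology on $\Homeo^{Lip}_+(\mathbb{S}^1)$ would make the closed subgroup $C_G(G_{\overline V})\cong\Homeo^{Lip}_+(I)$ Polish, contradicting Theorem \ref{thm_also_1} and settling part (2), hence both parts.
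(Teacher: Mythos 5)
Your proposal is correct and follows essentially the same route as the paper: realize $\Homeo_+^{Lip}(I)$ as a centralizer, hence a closed subgroup under any Hausdorff group topology, and invoke Theorem \ref{thm_also_1}. The only cosmetic difference is that you deduce (1) from (2) via the centralizer of the unit translation (which, under the paper's convention that $\Homeo_+^{Lip}(\mathbb{S}^1)$ is the subgroup of periodic maps of $\mathbb{R}$, is exactly that group), whereas the paper treats both cases uniformly by taking the centralizer of the set of maps fixing $[0,1]$ pointwise; your verification of the support/centralizer identity is in fact more detailed than the paper's.
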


We make some remarks about extending our conclusions to the group $\Diff_+^{1+\epsilon}(\mathbb{S}^1)$; see Remark \ref{rmk_extend}.

We wish to make a few comments about the techniques used in this paper.  The purely abstract question of when a given group $G$ admits some Polish topology seems unapproachable in many cases.  There are very few known examples of groups of cardinality continuum whose algebraic structure precludes the existence of a Polish group topology.  Perhaps the earliest class of examples is given by the so-called \textit{normed groups} of Dudley's classical paper \cite{dudley_1961a}.  Dudley's results imply that there is no Polish topology compatible with the group structure of, for instance, the free group on continuum-many generators or the free abelian group on continuum-many generators.  Hjorth gives another interesting example of a group with no compatible Polish topology in \cite{hjorth_2000a} Ch. 8 Ex. 8.6, while Rosendal displays a variety of such objects in \cite{rosendal_2005a}, including the homeomorphism groups $\Homeo\mathbb{Q}$ and $\Homeo\omega^\omega$ of the rationals and the Baire space, respectively.

An alternative formulation of our motivating question is often more manageable in concrete cases: suppose $G$ is an abstract group and $\Sigma$ is a distinguished $\sigma$-algebra of subsets of $G$.  May $G$ be assigned a Polish group topology in such a way that $\Sigma$ becomes exactly the class of Borel subsets of $G$?  If $G$ admits a Polish group topology generating $\Sigma$ as its Borel sets, then we say $G$ is a \textit{Polishable} group.  Most commonly, this question arises in the context where $G$ is a Borel subgroup of some given Polish group $H$, and $\Sigma$ is the class of Borel subsets of $H$ intersected with $G$.  In this case the additional requirement that the Polish group topology must generate $\Sigma$ as its Borel sets is natural as it is no stronger than requiring that the natural injection of $H$ into $G$ be continuous; in addition, if such a Polish topology exists, then it is unique.  Questions about Polishability are often easier to answer, as knowing which sets are in $\Sigma$ gives additional control over the problem.


The key lemma of this paper is that the abstract problem and the Polishability problem are in fact one and the same for sufficiently rich subgroups of $\Homeo_+(I)$.  This allows us to give new examples of groups which preclude Polish topologization entirely.

\begin{lemma} \label{lemma_main}  Let $G$ be a subgroup of $\Homeo_+(I)$ with the following property: whenever $U\subseteq I$ is a nonempty open set, there is a non-identity map $g_U\in G$ which fixes $I\backslash U$ pointwise.  Let $\Sigma$ be the class of Borel subsets of $G$ inherited as a subgroup of $\Homeo_+(I)$.  Then $G$ admits some Polish group topology if and only if $(G,\Sigma)$ is Polishable.
\end{lemma}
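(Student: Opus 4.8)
The backward implication is immediate: if $(G,\Sigma)$ is Polishable then by definition $G$ carries a Polish group topology. So the content lies in the forward direction, and the plan is to fix an \emph{arbitrary} Polish group topology $\tau$ on $G$ and prove that the $\tau$-Borel $\sigma$-algebra coincides with $\Sigma$; this exhibits $\tau$ itself as a witness to Polishability. Everything will be reduced to a single statement: that the inclusion homomorphism $\iota\colon (G,\tau)\to\Homeo_+(I)$ is $\tau$-Borel. Granting this, $\iota$ is a Borel homomorphism between Polish groups, hence continuous by the classical theorem that a Baire-measurable (in particular Borel) homomorphism between Polish groups is continuous; being injective, the Lusin--Souslin theorem then shows that $\iota$ has Borel image $G\subseteq\Homeo_+(I)$ and is a Borel isomorphism onto that image. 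Since the subspace Borel structure on $\iota(G)=G$ is by definition exactly $\Sigma$, the $\tau$-Borel sets coincide with $\Sigma$, as required. (Note that this argument simultaneously \emph{derives} that $G$ is Borel in $\Homeo_+(I)$, which the hypothesis does not assume.)

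It remains to show $\iota$ is $\tau$-Borel, and here the plan exploits the structure of the target. With its uniform topology, $\Homeo_+(I)$ has Borel $\sigma$-algebra generated by the evaluation functionals $h\mapsto h(p)$ for $p\in\mathbb{Q}\cap(0,1)$ (the map $h\mapsto (h(p))_{p\in\mathbb{Q}}$ is a continuous injection of one Polish space into another, hence a Borel isomorphism onto its image by Lusin--Souslin). Thus it suffices to prove that each composite $g\mapsto g(p)$ is $\tau$-Borel, equivalently that each set $\{g\in G: g(p)<q\}$ for $p,q\in\mathbb{Q}$ is $\tau$-Borel --- and in fact, since Baire-measurability of a homomorphism already forces continuity, it suffices that these sets merely have the $\tau$-Baire property.

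The heart of the matter, and the step I expect to be the main obstacle, is to extract a $\tau$-Borel description of the order relation $g(p)<q$ from the hypothesis on $G$. This is exactly where the abundance of small-support elements should be used. Fix, for every rational subinterval $(c,d)\subseteq(0,1)$, a nontrivial $a_{c,d}\in G$ with $\mathrm{supp}(a_{c,d})\subseteq(c,d)$; this yields a countable family of probes. The governing mechanism is that conjugation transports supports, $\mathrm{supp}(g\,a\,g^{-1})=g(\mathrm{supp}(a))$, so the way $g$ conjugates these probes records where $g$ sends intervals. Moreover, disjointness of supports forces commutation, and commutation is $\tau$-closed: the map $g\mapsto [\,g\,a_{c,d}\,g^{-1},\,a_{c',d'}]$ is $\tau$-continuous and $\{e\}$ is closed, so conditions asserting that $g(c,d)$ is disjoint from $(c',d')$ are captured by $\tau$-closed sets. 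The scheme is then to assemble countably many such commutation conditions, sharpened by their Baire-category consequences via Pettis' theorem in the Polish group $(G,\tau)$, into a $\tau$-Borel (or at least $\tau$-Baire-measurable) characterization of $\{g:g(p)<q\}$.

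Two points make this assembly delicate and will require care. First, commutation is only \emph{necessary}, not sufficient, for disjoint supports, so the raw probe conditions overcount and must be refined --- for instance by also quantifying over conjugates of the probes, or by using the hypothesis to manufacture an element genuinely moving a point near $p$ and then pinning down $g(p)$ to arbitrary precision. Second, the fixed probes have supports that do not shrink with $g$, so the characterization cannot rest on any single test but must be phrased as a countable combination over a family of probes whose supports close down on $p$. Once the relations $g(p)<q$ are shown to be $\tau$-Baire-measurable by this route, the reduction above closes the argument and identifies the $\tau$-Borel sets with $\Sigma$.
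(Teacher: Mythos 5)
Your architecture matches the paper's: both reductions (backward direction trivial; forward direction via making the inclusion $G\to\Homeo_+(I)$ Borel/continuous and then invoking Lusin--Souslin to identify the $\tau$-Borel sets with $\Sigma$) are sound, and you have correctly located the crux in showing that the sets $\{g\in G: g(p)<q\}$ are $\tau$-definable from commutation with small-support probes. But that crux is precisely where your proposal stops: you name it ``the main obstacle,'' list two difficulties (commutation is necessary but not sufficient for disjoint supports; a fixed countable probe family cannot pin down $g(p)$ exactly), and offer only gestures toward resolving them. This is a genuine gap, and the missing idea is the paper's Lemma \ref{lemma_intermediate}: if $f$ commutes with $g_{W'}$ for \emph{every} nonempty open $W'\subseteq W$, then $f$ fixes $W$ pointwise (if $f(x)=y\neq x$ for some $x\in W$, choose $W'\subseteq W$ so small that $f(W')\cap W'=\emptyset$; then $g_{W'}f(z)=f(z)\neq fg_{W'}(z)$ at any $z\in W'$ moved by $g_{W'}$). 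Quantifying over \emph{all} open subintervals, rather than refining individual probes, is what converts commutation from a necessary condition into a characterization, yielding
\[
C(U,V)=\{f\in G: f(\overline{U})\subseteq\overline{V}\}=\bigcap\bigl\{\{f: [fg_{U'}f^{-1},g_{W'}]=e\}: U'\subseteq U,\ W'\subseteq I\backslash\overline{V}\ \text{open}\bigr\}.
\]

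Your second worry (countability) is then moot: each set on the right is closed in any Hausdorff group topology, and an \emph{arbitrary} intersection of closed sets is closed, so $C(U,V)$ is $\tau$-closed outright --- no countable basis of probes is needed. From this the evaluation maps $g\mapsto g(x)$ are genuinely $\tau$-\emph{continuous}, not merely Baire-measurable: for small intervals $V_1,V_2$ flanking $x$, the complement of $C(V_1,I\backslash\overline{V_1})\cup C(V_2,I\backslash\overline{V_2})$ is a $\tau$-open neighborhood of the identity all of whose elements move $x$ by less than $\epsilon$. By Arens' theorem $\tau$ then contains the compact-open topology, the inclusion is continuous, and Lusin--Souslin finishes exactly as you intend. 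In particular the detours through Pettis' theorem and the automatic continuity of Borel homomorphisms, which you lean on to lower the bar to Baire measurability, are unnecessary once the quantified commutation lemma is in hand --- but without that lemma (or an equivalent), the characterization of $\{g:g(p)<q\}$ is not established and the proof is incomplete.
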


If a group satisfies the condition given in the lemma, then it contains a rich system of ``small perturbations.''  All of the homeomorphism groups we have discussed so far satisfy this condition.

Lemma \ref{lemma_main} follows directly from previous results of Arens (\cite{arens_1946a}), Kallman (\cite{kallman_1986a}), and Hayes (\cite{hayes_1997a}) which we state and prove in Section 2.  In Section 3 we topologize $\PL_+(I)$ as a Polish space and make some easy Baire category observations.  The proofs of Theorem \ref{thm_main} and Corollary \ref{cor_main} lie in Section 4, while Theorems \ref{thm_also_1} and \ref{thm_also_2} and Corollary \ref{cor_also} are proven in Section 5.

\section{Lemmas and Observations}

In this section we obtain Lemma \ref{lemma_main}, as well as recall some well-known Baire category paraphernalia that will be relevant in later sections.  Lemma \ref{lemma_kallman} is due to Kallman (\cite{kallman_1986a}) and Lemma \ref{lemma_hayes} may be extracted from the 1997 UNT Ph.D. dissertation of Diana Hayes (\cite{hayes_1997a}).  The original sources state the lemmas in considerably greater generality.  We tailor the lemmas to our immediate goals, and for completeness of exposition, we include brief proofs.

Recall that if $G$ is a group and $\tau$ is a topology on $G$, then the pair $(G,\tau)$ is called a topological group if both the multiplication mapping $(g,h)\mapsto gh$, $G\times G\rightarrow G$ and the inversion mapping $g\mapsto g^{-1}$, $G\rightarrow G$ are $\tau$-continuous.  In this case we call $\tau$ a group topology on $G$.  In order to get a handle on what the possible group topologies look like for the groups we are considering, we need the following purely algebraic lemma.

\begin{lemma} \label{lemma_intermediate}  Let $G$ be a subgroup of $\Homeo_+(I)$ with the following property: whenever $U\subseteq I$ is a nonempty open set, there is a non-identity map $g_U\in G$ which fixes $I\backslash U$ pointwise.  Let $W\subseteq G$ be open.  If $f\in G$ commutes with $g_{W'}$ for every open nonempty $W'\subseteq W$, then $f$ fixes $W$ pointwise.
\end{lemma}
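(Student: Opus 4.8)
The plan is to prove the contrapositive. Reading $W$ as an open subset of $I$ (the conclusion ``$f$ fixes $W$ pointwise'' forces this, so I take the displayed ``$W\subseteq G$'' to be a slip for $W\subseteq I$), I would assume that $f$ moves some point of $W$ and manufacture a nonempty open $W'\subseteq W$ with which $f$ fails to commute, contradicting the hypothesis.

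The engine of the argument is the elementary fact that conjugation transports supports: for any $h\in\Homeo_+(I)$, a point $z$ is moved by $fhf^{-1}$ if and only if $f^{-1}(z)$ is moved by $h$, so the set of points moved by $fhf^{-1}$ is precisely the $f$-image of the set moved by $h$. In particular, if $g_{W'}$ fixes $I\setminus W'$ pointwise, then $fg_{W'}f^{-1}$ fixes every point of $I\setminus f(W')$. Commutation of $f$ with $g_{W'}$ says $g_{W'}=fg_{W'}f^{-1}$, which therefore forces $g_{W'}$ to fix $I\setminus f(W')$ as well. Thus the way to extract a contradiction is to locate $W'$ so that $f(W')$ is disjoint from $W'$, while $g_{W'}$ genuinely moves a point of $W'$.

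Carrying this out: suppose $x_0\in W$ with $f(x_0)\ne x_0$. Using continuity of $f$ at $x_0$ together with disjoint open neighborhoods of the distinct points $x_0$ and $f(x_0)$, and intersecting with $W$, I would obtain a nonempty open set $W'$ with $x_0\in W'\subseteq W$ and $f(W')\cap W'=\emptyset$. The richness hypothesis applied to $W'$ yields a non-identity $g_{W'}\in G$ fixing $I\setminus W'$ pointwise, and since $g_{W'}\ne\mathrm{id}$ there is some $y\in W'$ with $g_{W'}(y)\ne y$. By hypothesis $f$ commutes with $g_{W'}$, so $g_{W'}(y)=fg_{W'}f^{-1}(y)$; but $y\in W'$ lies outside $f(W')$, and $fg_{W'}f^{-1}$ fixes $I\setminus f(W')$ pointwise, hence $g_{W'}(y)=y$. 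This contradicts the choice of $y$, completing the contrapositive.

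The argument is short, and I expect no serious obstacle; the one step demanding care is the construction of $W'$, where I must ensure simultaneously that $W'$ is a legitimate input to the richness hypothesis (nonempty, open, and contained in $W$) and small enough that $f(W')\cap W'=\emptyset$. Both are guaranteed by continuity of $f$ and the separation of $x_0$ from $f(x_0)$, so the remainder is purely formal.
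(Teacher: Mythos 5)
Your proposal is correct and follows essentially the same route as the paper: both prove the contrapositive by shrinking a neighborhood of a moved point $x_0$ to a nonempty open $W'\subseteq W$ with $f(W')\cap W'=\emptyset$ and then observing that $g_{W'}$ cannot commute with $f$; the only difference is cosmetic (you phrase the contradiction via $fg_{W'}f^{-1}$ fixing $I\setminus f(W')$, while the paper directly compares $fg_{W'}(z)$ with $g_{W'}f(z)$). Your reading of $W\subseteq G$ as a slip for $W\subseteq I$ is also the intended one.
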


\begin{proof}  Suppose contrapositively that there exists $x\in W$ with $f(x)=y\neq x$.  Let $\delta>0$ be so small that $2\delta<|y-x|$, and set $W'=W\cap(x-\delta,x+\delta)\cap f^{-1}(y-\delta,y+\delta)$.  So $W'\subseteq W$ and $f(W')\cap W'=\emptyset$.  Since $g_{W'}$ is not the identity map, find $z\in W'$ for which $g_{W'}(z)\neq z$.  Then $f(z)\notin W'$ and hence $g_{W'}f(z)=f(z)$.  But $g_{W'}(z)\neq z$, so $fg_{W'}(z)\neq f(z)$ since $f$ is a bijection.  This shows $f$ and $g_{W'}$ do not commute, which proves the lemma.
\end{proof}

\begin{lemma}[Kallman 1986] \label{lemma_kallman}  Let $G$ be a subgroup of $\Homeo_+(I)$ with the following property: whenever $U\subseteq I$ is a nonempty open set, there is a non-identity map $g_U\in G$ which fixes $I\backslash U$ pointwise.  For each pair of nonempty open sets $U,V\subseteq I$ let\\

\begin{center} $C(U,V)=\{f\in G:f(\overline{U})\subseteq \overline{V}\}$.
\end{center}
\vspace{.3cm}

If $\tau$ is a Hausdorff group topology on $G$, then each $C(U,V)$ is closed in $(G,\tau)$.
\end{lemma}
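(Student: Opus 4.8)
The plan is to write $C(U,V)$ as an intersection of sets that are manifestly closed in any Hausdorff group topology, using the only two topological facts at our disposal: that centralizers of single elements are $\tau$-closed, and that the conjugation maps $f\mapsto f^{-1}gf$ are $\tau$-continuous. Both are immediate. For fixed $g$, the map $f\mapsto f^{-1}gf$ is a composition of the $\tau$-continuous inversion and multiplication; and the centralizer $Z(g)=\{f\in G:fg=gf\}$ is the preimage of the diagonal under the continuous map $f\mapsto(fg,gf)$, with the diagonal closed because $\tau$ is Hausdorff.

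First I would record, as a consequence of Lemma \ref{lemma_intermediate}, that for any nonempty open $W\subseteq I$ the pointwise-fixing set
\[ F_W=\{f\in G:f(x)=x\text{ for all }x\in W\} \]
is $\tau$-closed. Indeed, Lemma \ref{lemma_intermediate} gives $\bigcap_{W'}Z(g_{W'})\subseteq F_W$, the intersection running over nonempty open $W'\subseteq W$, and the reverse inclusion is an elementary check: if $f$ fixes $W$ pointwise then $f$ preserves $W$ and $I\setminus W$ setwise, and a short case analysis shows $f$ commutes with each $g_{W'}$. Hence $F_W=\bigcap_{W'}Z(g_{W'})$ is an intersection of closed sets, so it is closed. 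I would also note that $F_W=F_{\overline W}$, since continuity forces any $f$ fixing $W$ pointwise to fix $\overline W$ pointwise as well.

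The heart of the argument is the identity
\[ C(U,V)=\bigcap_{\emptyset\neq W'\subseteq I\setminus\overline V}\{f\in G:f^{-1}g_{W'}f\in F_U\}, \]
the intersection running over nonempty open $W'$. Granting this, each set on the right is the preimage of the closed set $F_U$ under a continuous conjugation map, hence closed, and so $C(U,V)$ is closed. The forward inclusion is routine: if $f(\overline U)\subseteq\overline V$ and $W'\cap\overline V=\emptyset$, then $g_{W'}$ fixes $\overline V$, hence fixes $f(\overline U)$ pointwise, so $f^{-1}g_{W'}f$ fixes $\overline U$ and thus lies in $F_U$.

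The converse inclusion is the step I expect to be the main obstacle, and it calls for a small trick. Suppose $f(\overline U)\not\subseteq\overline V$, so some $x\in\overline U$ has $f(x)\notin\overline V$. The naive move---take a tiny $W'$ around $f(x)$ disjoint from $\overline V$ and hope $g_{W'}$ detects the failure---is flawed, because we do not get to choose the perturbation $g_{W'}$, and the given non-identity map might happen to move only points lying outside $f(\overline U)$. The fix is to take $W'=(I\setminus\overline V)\cap f(U)$, which is open and nonempty (since $f(x)\in\overline{f(U)}\cap(I\setminus\overline V)$ with $I\setminus\overline V$ open) and is entirely contained in $f(U)$. Then whatever point $z\in W'$ the non-identity map $g_{W'}$ actually moves, we have $z=f(x')$ for some $x'\in U\subseteq\overline U$, and injectivity of $f$ yields $f^{-1}g_{W'}f(x')\neq x'$, so $f^{-1}g_{W'}f\notin F_U$. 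This exhibits a witnessing $W'$, completing the converse and hence the proof.
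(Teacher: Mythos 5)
Your proof is correct and is essentially the paper's argument in lightly reorganized form: after unwinding $F_U=\bigcap_{U'}Z(g_{U'})$, your intersection $\bigcap_{W'}\{f:f^{-1}g_{W'}f\in F_U\}$ coincides term by term with the paper's $\bigcap_{U',W'}\{f:[fg_{U'}f^{-1},g_{W'}]=e\}$, and both verifications rest on Lemma \ref{lemma_intermediate}. The only difference is cosmetic: you establish the converse inclusion by directly constructing the witnessing set $W'=(I\setminus\overline{V})\cap f(U)$, whereas the paper invokes Lemma \ref{lemma_intermediate} a second time.
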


\begin{proof}  If $\overline{V}=I$ then $C(U,V)=G$, in which case there is nothing to prove, so assume $\overline{V}\neq I$.  Let $U'$ and $W'$ be arbitrary nonempty open subsets of $U$ and $I\backslash \overline{V}$ respectively. Let $g_{U'}$ and $g_{W'}$ be the maps associated to $U'$ and $W'$, respectively, as given by the hypothesis.  Since $\tau$ is a group topology, the mapping $\phi(f)=[fg_{U'}f^{-1},g_{W'}]$, $\phi:G\rightarrow G$ is continuous (where $[\cdot,\cdot]$ denotes the ordinary commutator).  Since $(G,\tau)$ is Hausdorff, this implies that the preimage of the identity\\

\begin{center} $F(U',W')=\phi^{-1}(e)$
\end{center}
\vspace{.3cm}

\noindent is closed in $(G,\tau)$.  Thus to finish the proof it suffices to observe that\\

\begin{center} $C(U,V)=\displaystyle\bigcap\{F(U',W'):U'\subseteq U$ open, $W'\subseteq I\backslash \overline{V}$ open$\}$,
\end{center}
\vspace{.3cm}

\noindent whence $C(U,V)$ is an intersection of closed sets and hence closed.

Suppose $f$ lies in the right-hand side above, i.e. suppose for all $U'\subseteq U$ open and $W'\subseteq I\backslash \overline{V}$ open, $fg_{U'}f^{-1}$ commutes with $g_{W'}$.  Lemma \ref{lemma_intermediate} implies that for each open $U'\subseteq U$, $fg_{U'}f^{-1}$ fixes $I\backslash \overline{V}$ pointwise.  This in turn implies that $g_{U'}$ fixes $f^{-1}(I\backslash \overline{V})$ pointwise, for each open $U'\subseteq U$.  Since $U'$ ranges over all open subsets of $U$, we have $U\cap f^{-1}(I\backslash \overline{V})=\emptyset$.  In other words $f(U)\subseteq \overline{V}$, hence $f(\overline{U})\subseteq \overline{V}$, and $f$ lies in the left-hand side above.  These arguments also work in reverse, so set equality holds and the proof is complete.
\end{proof}

\begin{lemma}[Hayes 1997] \label{lemma_hayes}  Let $G$ be a subgroup of $\Homeo_+(I)$ with the following property: whenever $U\subseteq I$ is a nonempty open set, there is a non-identity map $g_U\in G$ which fixes $I\backslash U$ pointwise.  If $\tau$ is a Hausdorff group topology on $G$, then $\tau$ contains the compact-open topology.
\end{lemma}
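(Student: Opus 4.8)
The plan is to produce a subbasis for the compact-open topology on $G$ and to verify that each of its members is $\tau$-open. Since $I$ is compact, the compact-open topology on $C(I,I)$ is exactly the topology of uniform convergence; I intend to reduce this, using the monotonicity of the maps in $G$, all the way down to the topology of pointwise evaluation. Concretely, the target is to show that for every $p\in(0,1)$ and every $c\in(0,1)$ the two ``one-point'' sets $\{f\in G:f(p)<c\}$ and $\{f\in G:f(p)>c\}$ are $\tau$-open. The engine for this is Lemma \ref{lemma_kallman}, which guarantees that each containment set $C(U,V)=\{f\in G:f(\overline{U})\subseteq\overline{V}\}$ is $\tau$-closed, so that its complement is $\tau$-open.

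First I would realize the one-point sets as complements of well-chosen $C(U,V)$. Given $p,c\in(0,1)$, set $U=(p,1)$ and $V=(c,1)$, so that $\overline{U}=[p,1]$ and $\overline{V}=[c,1]$. Because every $f\in G$ is an increasing homeomorphism fixing $0$ and $1$, we have $f(\overline{U})=[f(p),1]$, and this lies in $[c,1]=\overline{V}$ precisely when $f(p)\geq c$, the right-endpoint constraint being automatic. Hence $C(U,V)=\{f:f(p)\geq c\}$, and its complement $\{f:f(p)<c\}$ is $\tau$-open. Symmetrically, taking $U=(0,p)$ and $V=(0,c)$ gives $\overline{V}=[0,c]$ and $f(\overline{U})=[0,f(p)]\subseteq[0,c]$ if and only if $f(p)\leq c$, so $\{f:f(p)>c\}$ is $\tau$-open. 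Intersecting these, every set $\{f:a<f(p)<b\}$ with $p,a,b\in(0,1)$ is $\tau$-open; since each $f\in G$ carries interior points to interior points, such sets form a subbasis for the topology of pointwise convergence on $G$, and we conclude that the pointwise topology is contained in $\tau$.

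It remains to pass from pointwise to uniform, and this is the step I expect to be the crux. The key is that on increasing homeomorphisms of $I$ the topology of uniform convergence is no finer than that of pointwise convergence---a P\'olya--Dini type phenomenon which fails for arbitrary continuous maps but holds here by monotonicity. To make this precise I would fix $f_0\in G$ and $\epsilon>0$, invoke uniform continuity of $f_0$ to select interior points $0<x_1<\cdots<x_{n-1}<1$ (and set $x_0=0$, $x_n=1$) so that $f_0$ varies by less than $\epsilon/2$ across each consecutive gap, and consider the pointwise-open neighborhood $N=\{f:|f(x_i)-f_0(x_i)|<\epsilon/2\text{ for all }i\}$, noting that the constraints at $x_0$ and $x_n$ are automatic. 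For $f\in N$ and arbitrary $x\in[x_i,x_{i+1}]$, monotonicity yields $f(x_i)\leq f(x)\leq f(x_{i+1})$ and likewise for $f_0$; combining these with the inequalities defining $N$ and the smallness of the gaps gives $|f(x)-f_0(x)|<\epsilon$. Thus $N$ sits inside the uniform $\epsilon$-ball about $f_0$, so every basic uniform neighborhood is pointwise-open, and the uniform (equivalently, compact-open) topology is contained in the pointwise topology, hence in $\tau$.

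The genuine obstacle is this final reduction: it relies essentially on the maps being monotone and fixing the endpoints, so the estimate must be arranged so that finitely many evaluation constraints truly dominate the supremum on every subinterval. By contrast, extracting the one-point sets from Lemma \ref{lemma_kallman} is routine, amounting to placing one endpoint of $\overline{V}$ at $0$ or $1$ so that one of the two containment inequalities becomes vacuous.
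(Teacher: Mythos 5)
Your proof is correct, and although it shares the paper's skeleton---Lemma \ref{lemma_kallman} supplies $\tau$-closed sets $C(U,V)$, from which one extracts control of point evaluations and then upgrades to the compact-open topology---both halves are executed differently. For the first half, the paper proves continuity of $f\mapsto f(x)$ at the identity by taking the complement of $C(V_1,I\backslash\overline{V_1})\cup C(V_2,I\backslash\overline{V_2})$ for two small intervals flanking $x$, and then propagates to an arbitrary $f_0$ by left translation (using continuity of multiplication); you instead realize the subbasic sets $\{f:f(p)<c\}$ and $\{f:f(p)>c\}$ directly as complements of the single sets $C((p,1),(c,1))$ and $C((0,p),(0,c))$, anchoring one endpoint of $V$ at $0$ or $1$ so that one of the two containment constraints becomes vacuous; this is cleaner and needs no translation step beyond what Lemma \ref{lemma_kallman} already uses. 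For the second half, the paper quotes Arens's theorem (continuity of all point evaluations forces $\tau$ to contain the compact-open topology) as a black box, whereas you reprove exactly that step via the standard monotonicity-plus-partition estimate showing that finitely many evaluation constraints dominate the sup norm; your version is more self-contained at the cost of a page of elementary inequalities. Two minor points to tidy: your estimate yields $|f(x)-f_0(x)|<\epsilon$ for each $x$, hence only $\sup_x|f(x)-f_0(x)|\leq\epsilon$, so run the argument with $\epsilon/2$ to land inside the open uniform ball; and the subbasis claim should note that the pointwise-open sets coming from intervals touching $0$ or $1$, and from the points $p=0,1$ themselves, are precisely the one-sided sets you already produced (or are trivial), so nothing is missing there.
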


\begin{proof}  Arens (\cite{arens_1946a}) showed that if $G$ is a subgroup of $\Homeo_+(I)$ and $\tau$ is a group topology on $G$ for which the point evaluation maps $g\mapsto g(x)$, $(G,\tau)\rightarrow I$ are all continuous for $x\in I$, then $\tau$ must contain the compact-open topology.  So it suffices for us to check that point-evaluation maps are continuous.

To that end, let $x\in I$ be arbitrary.  If $x=0$ or $x=1$, then every element of $G$ fixes $x$, whence continuity is automatic, so assume that $x\in (0,1)$.

We start by showing $f\mapsto f(x)$ is continuous at the identity $e\in G$.  Let $U\subseteq I$ be an open neighborhood of $x$.  Assume $U$ contains neither $0$ nor $1$.  Let $\epsilon>0$ be so small that $[x-\epsilon,x+\epsilon]\subseteq U$, and let $\delta=\frac{\epsilon}{2}$.  Set $V_1=(x-\epsilon,x-\delta)$ and $V_2=(x+\delta,x+\epsilon)$.  By Lemma \ref{lemma_kallman}, the set\\

\begin{center} $F=C(V_1,I\backslash\overline{V_1})\cup C(V_2,I\backslash\overline{V_2})$
\end{center}
\vspace{.3cm}

\noindent is closed in $(G,\tau)$.  Hence the complementary set $W=G\backslash F$ is a $\tau$-open neighborhood of identity.  For every map $f\in W$, we have $\overline{V_1}\cap f(\overline{V_1})\neq\emptyset$ and $\overline{V_2}\cap f(\overline{V_2})\neq\emptyset$.  Since each such $f$ preserves the linear order on $I$, it follows that $f(x)\in[x-\epsilon,x+\epsilon]\subseteq U$ for all $f\in W$.  This shows $f\mapsto f(x)$ is continuous at $e\in G$ for each $x\in X$.

For continuity everywhere, let $x\in I$ and $f_0\in G$ be arbitrary, and let $U\subseteq I$ be an open set containing $f_0(x)$.  Since $f_0^{-1}$ is a homeomorphism, $f_0^{-1}(U)$ is a neighborhood of $x$, and thus by continuity at identity there is $\tau$-open $W\subseteq G$ with $W\cdot x\subseteq f_0^{-1}(U)$.  Then $f_0W$ is a $\tau$-open neighborhood of $f_0$ in $G$, and $f_0W\cdot x\subseteq f_0(f_0^{-1}(U))=U$, showing continuity of $f\mapsto f(x)$ at $f_0$.
\end{proof}

\begin{lemma:lemma_main}  Let $G$ be a subgroup of $\Homeo_+(I)$ with the following property: whenever $U\subseteq I$ is a nonempty open set, there is a non-identity map $g_U\in G$ which fixes $I\backslash U$ pointwise.  Let $\Sigma$ be the class of Borel subsets of $G$ inherited as a subgroup of $\Homeo_+(I)$.  Then $G$ admits some Polish group topology if and only if $(G,\Sigma)$ is Polishable.
\end{lemma:lemma_main}

\begin{proof}  If $(G,\Sigma)$ is Polishable then obviously $G$ has a compatible Polish topology.  Conversely, suppose $(G,\tau)$ is a Polish group.  By Lemma \ref{lemma_hayes}, $\tau$ contains the compact-open topology and hence the identity injection $\varphi:G\rightarrow\Homeo_+(I)$ is continuous (where $\Homeo_+(I)$ carries the compact-open topology).  By the Lusin-Souslin theorem (\cite{kechris_1995a} Thm. 15.1), $\varphi$ is a Borel isomorphism of $G$ with $\varphi(G)=G$; thus $\tau$ is a topology on $G\cong\varphi(G)$ which realizes $\Sigma$ as its family of Borel sets.
\end{proof}

The task of showing $\PL_+(I)$ has no Polish topology is now reduced to the much simpler task of showing it is not Polishable as a subgroup of $\Homeo_+(I)$.  There is a somewhat standard argument for showing non-Polishability.  Rather than rephrasing the argument multiple times in this paper, we extract it into Lemma \ref{lemma_standard} below.  It relies on the beautiful theorem of Pettis \cite{pettis_1950a}, which we recall now.

\begin{thm}[Pettis] \label{thm_pettis}  Let $G$ be a Polish group and let $A\subseteq G$ be a set with the Baire property which is non-meager.  Then $AA^{-1}$ and $A^{-1}A$ contain a neighborhood of identity in $G$.
\end{thm}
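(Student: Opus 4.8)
The plan is to use the Baire property of $A$ to replace it by a genuinely open set up to a meager error, and then combine the Baire category theorem with continuity of the group operations to trap a neighborhood of the identity inside $AA^{-1}$ and $A^{-1}A$. The basic reformulation I would record first is that for $g\in G$ we have $g\in AA^{-1}$ if and only if $gA\cap A\neq\emptyset$, since $g=ab^{-1}$ with $a,b\in A$ is the same as $gb=a\in A$ with $b\in A$; symmetrically, $g\in A^{-1}A$ if and only if $Ag\cap A\neq\emptyset$. Thus it suffices to produce a neighborhood $V$ of the identity so that each of these intersections is nonempty for every $g\in V$.

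Next I would invoke the Baire property to write $A=U\triangle M$ with $U$ open and $M$ meager; because $A$ is non-meager, $U$ must be nonempty, and I would fix a point $u_0\in U$. For the set $AA^{-1}$ I would use the containment
\[
gA\cap A\ \supseteq\ (gU\cap U)\setminus(gM\cup M),
\]
which follows from $A\supseteq U\setminus M$ and $gA\supseteq gU\setminus gM$. Since left translation by $g$ is a homeomorphism of $G$, the set $gM$ is meager, so $gM\cup M$ is meager, while $gU\cap U$ is open. Hence, as long as $gU\cap U$ is nonempty, it is a nonempty open subset of the Polish (in particular Baire) space $G$, so it is non-meager, and removing the meager set $gM\cup M$ leaves a non-meager, hence nonempty, remainder. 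Therefore $gA\cap A\neq\emptyset$ whenever $gU\cap U\neq\emptyset$.

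Finally I would produce the neighborhood from continuity of multiplication: the map $g\mapsto gu_0$ is continuous and sends $e$ to $u_0\in U$, so there is an open neighborhood $V$ of $e$ with $gu_0\in U$ for all $g\in V$. For such $g$ the point $gu_0$ lies in $gU\cap U$, so $gU\cap U\neq\emptyset$, and the previous step gives $V\subseteq AA^{-1}$. The argument for $A^{-1}A$ is identical after replacing left translates by right translates: using that $g\mapsto u_0g$ is continuous one finds a neighborhood $V'$ of $e$ with $u_0g\in U$, so that $u_0g\in Ug\cap U$, and right-invariance of the meager ideal together with the same category estimate gives $V'\subseteq A^{-1}A$.

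I expect the only delicate points to be bookkeeping rather than substance: confirming that translation preserves meagerness (immediate, since translations are homeomorphisms) and correctly translating between membership in $AA^{-1}$ (resp. $A^{-1}A$) and the nonemptiness of the appropriate translate-intersection. There is no serious obstacle here, as the entire argument rests on the single category fact that a nonempty open set cannot be meager in a Baire space.
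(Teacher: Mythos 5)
Your proof is correct: the reduction of $g\in AA^{-1}$ to $gA\cap A\neq\emptyset$, the decomposition $A=U\triangle M$ via the Baire property, the containment $gA\cap A\supseteq(gU\cap U)\setminus(gM\cup M)$, and the category argument that a nonempty open set in a Baire space cannot become empty after deleting a meager set are all sound, as is the symmetric treatment of $A^{-1}A$ with right translates. The paper itself states this theorem without proof, citing Pettis's original article, so there is nothing to compare against; your argument is the standard one (and, as your own closing remark suggests, it uses only that $G$ is a Baire topological group, not full Polishness).
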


\begin{lemma} \label{lemma_standard}  Let $(G,\Sigma)$ be a standard Borel group.  Suppose there exists a sequence of subsets $(B_n)_{n\in\omega}$ with the following properties:
\begin{enumerate}
		\item $G=\displaystyle\bigcup_{n\in\omega}B_n$;\\
		\item for each $n$, $B_n$ is symmetric (i.e. $B_n^{-1}=B_n$) and $B_n\in\Sigma$;\\
		\item for each $n$, there exists $m_n\in\omega$ such that $B_n^2\subseteq B_{m_n}$; and\\
		\item for each $n$, whenever $(g_k)_{k\in\omega}$ is a sequence of elements in $G$, there exists $f\in G$ such that $f\notin\displaystyle\bigcup_{k\in\omega}g_kB_n$ (or $f\notin\displaystyle\bigcup_{k\in\omega}B_ng_k$).
\end{enumerate}
Then $(G,\Sigma)$ is not Polishable.
\end{lemma}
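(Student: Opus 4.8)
The plan is to argue by contradiction: suppose $(G,\Sigma)$ \emph{is} Polishable, so that there is a Polish group topology $\tau$ on $G$ whose Borel sets are exactly $\Sigma$. I will then use the covering hypothesis (1) together with the Baire category theorem to locate a ``large'' piece among the $B_n$, upgrade that largeness to a neighborhood of identity via Pettis' theorem, and finally contradict the absorption-failure hypothesis (4). The three structural conditions (2) and (3) are precisely what is needed to make the Pettis machinery applicable.

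First I would observe that since $G=\bigcup_{n}B_n$ and each $B_n\in\Sigma$ is $\tau$-Borel (hence has the Baire property), the Polish group $(G,\tau)$ is covered by countably many sets with the Baire property. By the Baire category theorem, some $B_{n_0}$ must be non-meager in $(G,\tau)$. Now apply Pettis' theorem (Theorem \ref{thm_pettis}) to $A=B_{n_0}$: since $B_{n_0}$ is symmetric by (2), the product $B_{n_0}B_{n_0}^{-1}=B_{n_0}^2$ contains a $\tau$-neighborhood $N$ of the identity. By condition (3) there is an index $m=m_{n_0}$ with $B_{n_0}^2\subseteq B_m$, so $B_m$ itself contains the open identity neighborhood $N$.

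Next I would use second-countability of the Polish topology $\tau$ to produce a countable ``translation cover.'' Since $N$ is an open neighborhood of the identity and $(G,\tau)$ is separable, $G$ can be covered by countably many left translates of $N$: choosing a countable dense set $\{g_k:k\in\omega\}$, the sets $g_kN$ cover $G$ (every $h\in G$ lies in some $g_kN$ because $hN^{-1}$ is open and meets the dense set). Since $N\subseteq B_m$, this gives $G=\bigcup_{k}g_kN\subseteq\bigcup_{k}g_kB_m$. But this directly contradicts hypothesis (4) applied to the index $m$ with the sequence $(g_k)_{k\in\omega}$, which guarantees an element $f\in G$ lying outside every $g_kB_m$. (In the symmetric alternative where (4) is stated with $B_mg_k$, I would instead cover $G$ by right translates $Ng_k$, which works identically since $N$ is also a right-translatable neighborhood in the second-countable group.) This contradiction shows no such $\tau$ can exist, so $(G,\Sigma)$ is not Polishable.

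The main obstacle I anticipate is the bookkeeping around which product and which translation side to use, so that hypotheses (2)--(4) line up correctly. In particular one must be careful that Pettis yields $B_{n_0}^2$ (not $B_{n_0}B_{m}$ or some mismatched product), which is exactly why symmetry in (2) and the single-index closure $B_n^2\subseteq B_{m_n}$ in (3) are phrased as they are; and one must match the left-versus-right translate cover to whichever disjunct of (4) is being invoked. Everything else is a routine application of Baire category and Pettis, so the real content is simply verifying that the four hypotheses are precisely the ingredients the Pettis argument consumes.
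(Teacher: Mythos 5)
Your proposal is correct and follows essentially the same route as the paper's own proof: locate a non-meager $B_{n_0}$ via Baire category, apply Pettis together with symmetry and condition (3) to put an identity neighborhood inside $B_{m_{n_0}}$, and use separability to cover $G$ by countably many translates, contradicting (4). The only difference is that you spell out the dense-translate covering argument explicitly, which the paper leaves to second-countability.
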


\begin{proof}  Suppose toward a contradiction that $\tau$ is a Polish topology on $G$ generating $\Sigma$ as its Borel sets.  Then each $B_n$ is Borel in $(G,\tau)$ by condition (2) and hence has the Baire property.  $(G,\tau)$ is non-meager in itself, and thus by condition (1), there must be some particular $n$ for which $B_n$ is non-meager.  By Pettis's Theorem \ref{thm_pettis}, $B_n^2=B_n^{-1}B_n$ contains a $\tau$-open neighborhood of identity, whence the set $B_{m_n}$ of condition (3) contains a $\tau$-open set also.  Since $(G,\tau)$ is second-countable, it follows that there exists a sequence $(g_k)_{k\in\omega}$ in $G$ for which $\{g_kB_{m_n}:k\in\omega\}$ (or $\{B_{m_n}g_k:k\in\omega\}$) covers $G$, contradicting (4).
\end{proof}

\section{$\PL_+(I)$ as a Polish topological space}

Our main tool for the proof of Theorem \ref{thm_main} is Lemma \ref{lemma_category}, a Baire category statement about multiplication in $PL_+(I)$.  For such a statement to make sense, we must observe that $PL_+(I)$ can be viewed as a Polish topological space in a natural way.

Let us introduce some notation.  We employ only the most rudimentary tools developed by Brin and Squier in their thorough study of $\PL_+(I)$ as an abstract group in \cite{brin_squier_2001a}.  For a map $f\in PL_+(I)$ and a point $x$ in $(0,1)$, let $f'_+(x)$ and $f'_-(x)$ denote the right- and left-hand derivatives of $f$ at $x$, respectively.  Define $f^*(x)=\frac{f'_+(x)}{f'_-(x)}$, the \textit{slope ratio} of $f$ at $x$, for all $x\in I$.  Note that since both the functions $f'_+$ and $f'_-$ satisfy the usual chain rule, their ratio $f^*$ also satisfies its own version of the chain rule, i.e. $(fg)^*(x)=f^*(g(x))g^*(x)$ for all maps $f$ and $g$ and all points $x\in(0,1)$.  By the definition of $PL_+(I)$, for each map $f\in PL_+(I)$ there are only finitely many points $x\in(0,1)$ for which $f^*(x)\neq1$.  We name these points the \textit{break points} of $f$, and denote by $B(f)$ the set of all such points.

For each $n\in\omega$ define\\

\begin{center} $A_n=\{f\in PL_+(I):\#B(f)=n\}$,
\end{center}
\vspace{.3cm}

Note that according to our definition $A_0=\{e\}$, but $A_n$ is an uncountably infinite set for each $n>0$, and $PL_+(I)=\displaystyle\bigcup_{n\in\omega}A_n$.

We intend to topologize $PL_+(I)$ by identifying each map $f$ in $A_n$ with its set of break points in the Polish product space $[(0,1)\times(0,1)]^n$.  To simplify our notation, we set $Z=(0,1)\times(0,1)$, and for each pair $z\in Z$ we permanently associate the symbols $x$ and $y$ to denote the first and second coordinates of $z$ respectively, so $z=(x,y)$.  Thus if $(z_1,z_2,...,z_n)$ is an $n$-tuple in $Z^n$, we assume the notation $z_i=(x_i,y_i)$ for each $1\leq i\leq n$.

With this assumption in place, let us say that an $n$-tuple $(z_1,z_2,...,z_n)\in Z^n$ \textit{defines a piecewise linear map} if both of the following two properties hold:\\

\begin{enumerate}
		\item $x_i<x_{i+1}$ and $y_i<y_{i+1}$ for every $1\leq i\leq n$; and\\
		\item no three consecutive points $z_{i-1},z_i,z_{i+1}$ are collinear in $Z$, where $1\leq i\leq n$, and we define $z_0=(0,0)$ and $z_{n+1}=(1,1)$.
\end{enumerate}
\vspace{.3cm}

For each $n\geq1$ let\\

\begin{center} $X_n=\{(z_1,z_2,...,z_n)\in Z^n:(z_1,z_2,...,z_n)$ defines a piecewise linear map$\}$.
\end{center}
\vspace{.3cm}

It is rather clear that if an $n$-tuple $(z_1,...,z_n)$ satisfies both (1) and (2), then so will any small perturbation of this tuple.  In other words, $X_n$ is an open subspace of $Z^n$, hence Polish.  

We define a map $\Phi_n:A_n\rightarrow X_n$ by letting $\Phi(f)=(z_1,z_2,...,z_n)$ be the $n$-tuple of coordinates $z_i=(x_i,f(x_i))$, where $x_1,...,x_n$ are the break points of $f$ ordered from left to right.  We also set $X_0=A_0=\{e\}$ (where the singleton space is equipped with its only possible topology, trivially a Polish topology) and let $\Phi_0$ be the identity on $A_0$.  Then for each $n\in\omega$, $\Phi_n$ is a bijection of $A_n$ onto $X_n$.  If $X$ denotes the disjoint union of the sets $X_n$ and $\Phi:PL_+(I)\rightarrow X$ is the union of the functions $\Phi_n$, then $\Phi$ is a bijection of $PL_+(I)$ with the Polish topological space $X$.  Thus, we associate to $PL_+(I)$ a Polish topology $\tau_0$ by declaring\\

\begin{center} $U$ is open in $(PL_+(I),\tau_0)~\longleftrightarrow~\Phi(U)$ is open in $X$.
\end{center}
\vspace{.3cm}

Note that each $A_n$ is clopen in this topology on $\PL_+(I)$.

Let us prove and apply an easy Baire category lemma.

\begin{lemma} \label{lemma_easy}  Let $M$ be a Baire space and $A\subseteq M$ a nowhere dense set.  Let $n\geq 1$ be an integer.  Then the set $E=\{(m_1,...,m_n)\in M^n:\exists i~m_i\in A\}$ is nowhere dense in $M^n$.
\end{lemma}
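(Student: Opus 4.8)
The plan is to prove the statement by showing that the complement $M^n \setminus E$ is dense and open, or equivalently that $E$ is closed with empty interior; but since $A$ need not be closed, $E$ need not be closed either, so I will instead directly verify the definition of nowhere dense, namely that the closure $\overline{E}$ has empty interior. The key observation is that $E$ is a finite union, $E = \bigcup_{i=1}^n E_i$ where $E_i = \{(m_1,\dots,m_n) : m_i \in A\}$, and a finite union of nowhere dense sets is nowhere dense in any space; so it suffices to show each $E_i$ is nowhere dense in $M^n$.

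First I would fix an index $i$ and analyze $E_i$ directly. Note that $E_i = M \times \cdots \times A \times \cdots \times M$ is a product with $A$ in the $i$-th coordinate and $M$ elsewhere. The crucial point is that the closure of such a product is the product of the closures, i.e. $\overline{E_i} = M \times \cdots \times \overline{A} \times \cdots \times M$, since $M$ is already closed in itself. Because $A$ is nowhere dense, $\overline{A}$ has empty interior in $M$.

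Next I would argue that $\overline{E_i}$ has empty interior in $M^n$. Suppose for contradiction that $\overline{E_i}$ contained a nonempty open set $W$. Basic open sets in the product topology have the form $U_1 \times \cdots \times U_n$ with each $U_j$ open in $M$ and all but finitely many equal to $M$; shrinking $W$, we may assume $W = U_1 \times \cdots \times U_n$ is a nonempty basic open set contained in $\overline{E_i} = M \times \cdots \times \overline{A} \times \cdots \times M$. Comparing the $i$-th coordinates, this forces $U_i \subseteq \overline{A}$, so the nonempty open set $U_i$ lies inside $\overline{A}$, contradicting the fact that $\overline{A}$ has empty interior. Hence each $\overline{E_i}$, and therefore each $E_i$, is nowhere dense.

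Finally I would combine the pieces: since $E = \bigcup_{i=1}^n E_i$ is a finite union of nowhere dense sets, $E$ is nowhere dense in $M^n$, which completes the proof. I do not expect any serious obstacle here; the only mild subtlety is the behavior of closures in product spaces, and the one genuine fact I am leaning on is that a finite union of nowhere dense sets remains nowhere dense, which holds in any topological space and does not even require the Baire hypothesis. The Baire assumption on $M$ is not actually needed for this lemma as stated—it is presumably carried along because $M = PL_+(I)$ will later be invoked as a Baire space—so I would phrase the argument purely topologically without appealing to Baire category.
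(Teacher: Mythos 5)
Your proof is correct and follows essentially the same route as the paper: decompose $E$ as the finite union of the sets $E_i=\{(m_1,\dots,m_n):m_i\in A\}$, show each $E_i$ is nowhere dense, and conclude via the fact that a finite union of nowhere dense sets is nowhere dense. The only difference is cosmetic—the paper justifies the nowhere-density of $E_i$ by writing it as $\pi_i^{-1}(A)$ for the coordinate projection $\pi_i$, whereas you verify it directly by computing $\overline{E_i}$ as a product of closures; your added observation that the Baire hypothesis is not actually needed is also accurate.
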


\begin{proof}  For $1\leq i\leq n$, set $E_i=\{(m_1,...,m_n)\in M^n:m_i\in A\}$, so $E$ is the finite union $E=\displaystyle\bigcup_{1\leq i\leq n}E_i$.  If $\pi_i$ is projection onto the $i$-th coordinate in $M^n$, then each $E_i=\pi_i^{-1}(A)$ is a nowhere dense set, whence $E$ is nowhere dense.
\end{proof}

\begin{cor} \label{corollary_useful}  Let $y_0\in(0,1)$ be fixed.  Then the set $\{g\in A_n:\exists x_0\in B(g)~g(x_0)=y_0\}$ is nowhere dense in $(A_n,\tau_0)$ for every $n\geq 1$.
\end{cor}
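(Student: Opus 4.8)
The plan is to transport the statement across the homeomorphism $\Phi_n$ and then apply Lemma \ref{lemma_easy} directly. First I would translate the target set into the coordinate picture. Under the identification $\Phi_n\colon A_n\to X_n$, a map $g$ is sent to the tuple $(z_1,\dots,z_n)$ with $z_i=(x_i,g(x_i))$, where $x_1,\dots,x_n$ are the break points of $g$ ordered from left to right. Since $B(g)=\{x_1,\dots,x_n\}$ and $g(x_i)$ is precisely the second coordinate $y_i$ of $z_i$, the condition ``there exists $x_0\in B(g)$ with $g(x_0)=y_0$'' becomes simply ``there exists $i$ with $y_i=y_0$.'' Thus $\Phi_n$ carries the set in question onto $\{(z_1,\dots,z_n)\in X_n:\exists i~y_i=y_0\}$, and since $\Phi_n$ is a homeomorphism it suffices to show this latter set is nowhere dense in $X_n$.

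Next I would set $M=Z=(0,1)\times(0,1)$, which is an open subset of $\mathbb{R}^2$ and hence a Baire space, and let $A=\{(x,y_0):x\in(0,1)\}$ be the horizontal segment at height $y_0$. This $A$ is closed in $Z$ with empty interior, so it is nowhere dense in $Z$. Lemma \ref{lemma_easy} then gives that $E=\{(z_1,\dots,z_n)\in Z^n:\exists i~z_i\in A\}$ is nowhere dense in $Z^n$. Observing that the condition $z_i\in A$ is the same as $y_i=y_0$, the set $\Phi_n(S_n)=\{(z_1,\dots,z_n)\in X_n:\exists i~y_i=y_0\}$ is exactly $E\cap X_n$, where $S_n$ denotes the set of the corollary.

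Finally I would use that $X_n$ is an open subspace of $Z^n$. A nowhere dense subset of a space remains nowhere dense when intersected with any open subspace: the closure of $E\cap X_n$ taken in $X_n$ is contained in $\overline{E}\cap X_n$, which has empty interior in $X_n$ because $X_n$ is open in $Z^n$ while $\overline{E}$ has empty interior there. Hence $E\cap X_n$ is nowhere dense in $X_n$, and pulling this back through the homeomorphism $\Phi_n$ shows $S_n$ is nowhere dense in $(A_n,\tau_0)$. I do not anticipate a genuine obstacle here; the only points requiring a moment's care are the passage from nowhere density in $Z^n$ to nowhere density in the open subspace $X_n$, and the bookkeeping that matches the break-point condition on $g$ with the coordinate condition $y_i=y_0$.
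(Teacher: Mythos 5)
Your proposal is correct and follows essentially the same route as the paper: identify the set with $\{(z_1,\dots,z_n):\exists i~y_i=y_0\}$ via $\Phi_n$, note that the horizontal cross-section $A$ at height $y_0$ is nowhere dense in $Z$, and apply Lemma \ref{lemma_easy}. The only difference is that you spell out the (correct and worthwhile) detail that nowhere density in $Z^n$ passes to the open subspace $X_n$, which the paper leaves implicit.
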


\begin{proof}  Note that the image under $\Phi$ of the set defined in the lemma is contained in $E\subseteq Z^n$, where\\

\begin{center} $E=\{(z_1,...,z_n)\in Z^n:\exists i~y_i=y_0\}$.
\end{center}
\vspace{.3cm}

If we set $A=\{z=(x,y)\in Z:y=y_0\}$, then $A$ is just the horizontal cross section of the box $Z=(0,1)\times(0,1)$ at $y_0$, clearly a nowhere dense set in $Z$.  Then $E$ is nowhere dense by Lemma \ref{lemma_easy}, and the corollary follows.
\end{proof}

Note that if $f\in PL_+(I)$ has exactly $n$ break points and $g\in PL_+(I)$ has exactly $m$ break points, then the number of break points of the product $fg$ is bounded above by $n+m$.  To see this, note that $B(fg)=\{x\in(0,1):(fg)^*(x)\neq 1\}$.  If $f^*(x)\neq 1$, then either $f^*(g(x))\neq1$ or $g^*(x)\neq 1$, since $(fg)^*(x)=f^*(g(x))g^*(x)$; it then follows that either $g(x)$ is a break point of $f$ or $x$ is a break point of $g$, i.e. $B(fg)\subseteq g^{-1}(B(f))\cup B(g)$ and $\#B(fg)\leq\#g^{-1}(B(f))+\#B(g)=\#B(f)+\#B(g)=n+m$.

The next lemma asserts that under the topology $\tau_0$, for a fixed $f\in A_n$, ``almost every'' product of the form $fg$ where $g_n\in A_m$ will have the maximal number of break points $n+m$, and consequently (perhaps counterintuitively), if $k>m$ then ``almost no'' maps $h\in A_k$ are of the form $h=fg$ for any map $g\in A_m$.

\begin{lemma} \label{lemma_category}  Let $n,m\in\omega$ be arbitrary, and equip $PL_+(I)$ with the topology $\tau_0$.  For every $f\in A_n$,
\begin{enumerate}
		\item $\{g\in A_m:fg\in A_{n+m}\}$ is comeager in $A_m$, and
		\item if $k>m$, $fA_m\cap A_k$ is meager in $A_k$.
\end{enumerate}
\end{lemma}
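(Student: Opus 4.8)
The plan is to prove (1) directly from the slope-ratio chain rule together with Corollary~\ref{corollary_useful}, and then to obtain (2) as a short formal consequence of (1) applied to $f^{-1}$. For (1), fix $f\in A_n$ and write its break points as $p_1<\cdots<p_n$ in $(0,1)$. Given $g\in A_m$ with break points $x_1<\cdots<x_m$ and $\tau_0$-coordinates $z_j=(x_j,g(x_j))$, recall from the break-point discussion preceding the lemma that $B(fg)\subseteq g^{-1}(B(f))\cup B(g)$, a set of at most $n+m$ points. I would first identify the exact criterion for maximality: $fg\in A_{n+m}$ precisely when these two candidate sets are disjoint, i.e. when no break point of $g$ is carried by $g$ onto a break point of $f$, equivalently $g(x_j)\neq p_i$ for all $i,j$. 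The direction I actually need is that this non-collision condition forces $fg\in A_{n+m}$, and it follows from $(fg)^*(x)=f^*(g(x))g^*(x)$: at a point $x=g^{-1}(p_i)$ that is not a break point of $g$ one computes $(fg)^*(x)=f^*(p_i)\neq1$, while at a break point $x_j$ of $g$ with $g(x_j)\notin B(f)$ one computes $(fg)^*(x_j)=g^*(x_j)\neq1$; at every other point both factors equal $1$. Thus when the candidate sets are disjoint all $n+m$ candidate points genuinely survive and no accidental cancellation occurs.

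It then remains to show the non-collision condition defines a comeager subset of $A_m$. In the coordinates of $\tau_0$ the equation $g(x_j)=p_i$ reads $y_j=p_i$, so the complementary ``bad'' set is $\bigcup_{i=1}^{n}\{g\in A_m:\exists j,\ y_j=p_i\}$. Each of the finitely many sets in this union is exactly the set shown to be nowhere dense in Corollary~\ref{corollary_useful}, taken with $y_0=p_i\in(0,1)$. Hence the bad set is a finite union of nowhere dense sets, so it is nowhere dense, and $\{g\in A_m:fg\in A_{n+m}\}$ is comeager in $A_m$, which proves (1).

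For (2), assume $k>m$. If $n=0$ then $f=e$ and $fA_m\cap A_k=A_m\cap A_k=\emptyset$ since the $A_j$ are pairwise disjoint, so I may assume $n\geq1$. The key observation is the equivalence $h\in fA_m\iff f^{-1}h\in A_m$, together with the fact that $f^{-1}\in A_n$ (the break points of $f^{-1}$ are the $f$-images of those of $f$). Applying part (1) to $f^{-1}$ with the role of $m$ now played by $k$, the set $\{h\in A_k:f^{-1}h\in A_{n+k}\}$ is comeager in $A_k$. Since $n\geq1$ and $k>m$ give $n+k\neq m$, every such $h$ satisfies $f^{-1}h\notin A_m$, hence $h\notin fA_m$. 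Therefore $fA_m\cap A_k$ is disjoint from a comeager set and is consequently meager in $A_k$.

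The main obstacle is the bookkeeping in part (1): one must check carefully that the non-collision condition is genuinely sufficient, namely that no slope ratio collapses to $1$ at a surviving candidate point, and that collisions strictly lower the count. This is precisely where the chain rule for $(fg)^*$ does the work. Once the problem is reduced to the coordinate condition $y_j\neq p_i$, everything else is a direct appeal to Corollary~\ref{corollary_useful} and a finite-union argument, and the passage from (1) to (2) is purely formal.
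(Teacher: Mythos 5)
Your proposal is correct and follows essentially the same route as the paper: part (1) via the chain rule for the slope ratio $(fg)^*$ showing that the non-collision condition $g(x_j)\neq p_i$ forces exactly $n+m$ break points, with the bad set handled by Corollary \ref{corollary_useful} and a finite-union argument, and part (2) deduced from part (1) applied to $f^{-1}$. The only cosmetic difference is your separate treatment of $n=0$ in part (2), which is unnecessary since $k>m$ already gives $n+k>m$ for all $n\geq 0$.
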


\begin{proof}  (\textit{Proof of (1).})  Let $E=\{g\in A_m:fg\in A_{n+m}\}$.  Let $B(f)=\{a_1,...,a_n\}$.  We claim that if $g$ is a map lying in the complement $A_m\backslash E$, then there must be some $x_0\in B(g)$ and some $1\leq i\leq n$ for which $g(x_0)=a_i$.  Suppose for a contradiction that this is not the case.  Enumerate $B(g)=\{b_1,...,b_m\}$, so $g(b_j)\neq a_i$ for each $1\leq i\leq n$, $1\leq j\leq m$.  For every $i$ we have $(fg)^*(g^{-1}(a_i))=f^*(a_i)g^*(g^{-1}(a_i))=f^*(a_i)\cdot 1\neq 1$, and for every $j$ we have $(fg)^*(b_j)=f^*(g(b_j))g^*(b_j)=1\cdot g^*(b_j)\neq 1$.  Therefore each of $g^{-1}(a_1),...,g^{-1}(a_n),b_1,...,b_m$ are distinct members of $B(fg)$ and $fg$ has $n+m$ break points, contradicting our choice of $g$ from the complement of $E$.  This proves the claim.

It follows then that\\

\begin{center} $A_m\backslash E\subseteq\displaystyle\bigcup_{1\leq i\leq n}\{g\in A_m:\exists x_0\in B(g)~g(x_0)=a_i\}$.
\end{center}
\vspace{.3cm}

The right-hand side above is a finite union of nowhere dense sets by Corollary \ref{corollary_useful}, and hence nowhere dense.  So $E$ is open dense in $A_m$.\\

(\textit{Proof of (2).})  Let $k>m$.  Note that $\#B(f^{-1})=\#B(f)=n$.  Then part (a) implies that for a comeager set of $h\in A_k$, $f^{-1}h$ has $k+n$ break points.  Any $h$ in this comeager set must not lie in $fA_m$, for if it did, and $h=fg$ for some $g\in A_m$, then we would have $k+n=\#B(f^{-1}h)=\#B(f^{-1}fg)=\#B(g)=m$, a contradiction.  Therefore $fA_m\cap A_k$ is meager in $A_k$.
\end{proof}

\section{Proof of Theorem \ref{thm_main}}

We have almost everything we need to prove Theorem \ref{thm_main}.  The last ingredient is to verify that each set $A_n$ is in fact Borel in $\Homeo_+(I)$.

\begin{prop}  For each $n\in\omega$, $A_n$ is $\Delta^0_3$ in $\Homeo_+(I)$.
\label{prop_structure}
\end{prop}

\begin{proof}  First consider the set $A_1$.  Let $\Phi_1:A_1\rightarrow X_1$ denote the bijection between $A_1$ and $X_1=\{(x,y)\in(0,1)\times(0,1):x\neq y\}$ defined in the previous section.  Denote $f=\Phi_1^{-1}$, so $f$ is a bijection of $X_1$ onto $A_1$.  It is clear from the definition that $f$, viewed as a mapping from the topological space $X_1$ into the topological group $\Homeo_+(I)$ is continuous.  Then since $X_1$ is $K_\sigma$, so too is $A_1=f(X_1)$ in $\Homeo_+(I)$.





Now for each $n\geq1$ let $B_n=\displaystyle\bigcup_{i=0}^n A_i=\{f\in\PL_+(I):\#B(f)\leq n\}$.  $B_1=A_1\cup\{e\}$ is $K_\sigma$ by the previous paragraph.  By Lemma 1.10 of Brin and Squier \cite{brin_squier_2001a}, each map $f\in A_n$ may be written as the product $f=g_1g_2...g_n$ of elements $g_1,...,g_n\in B_1$.  In other words $B_n=(B_1)^n$, and since $B_1$ is $K_\sigma$, so too is $B_n$.  Therefore $A_n=B_n\backslash B_{n-1}$ is $\Delta^0_3$ for all $n\geq 2$.
\end{proof}

As a remark, since $\PL_+(I)$ is algebraically generated by the $K_\sigma$ set $B_1$, it too is $K_\sigma$ in $\Homeo_+(I)$.

\begin{thm:thm_main}  There is no Polish group topology on $\PL_+(I)$.
\end{thm:thm_main}

\begin{proof}[Proof of Theorem \ref{thm_main}]  By Lemma \ref{lemma_main} it suffices to show that $(\PL_+(I),\Sigma)$ is not Polishable, where $\Sigma=\mathcal{B}(\PL_+(I))$ denotes the class of Borel subsets of $\PL_+(I)$ viewed as a topological subgroup of $\Homeo_+(I)$.  Let $(A_n)_{n\in\omega}$ be the subsets of $\PL_+(I)$ defined in the previous section.  For each $n$, let $B_n=\displaystyle\bigcup_{i=0}^n A_i=\{f\in\PL_+(I):\#B(f)\leq n\}$.  We claim the sequence $(B_n)$ satisfies all the hypotheses of Lemma \ref{lemma_standard}.  Clearly $\PL_+(I)=\displaystyle\bigcup_{n\in\omega}B_n$, so (1) holds.  Each $B_n$ is symmetric and Borel in $\Homeo_+(I)$ by Proposition \ref{prop_structure}, i.e. $B_n$ is in $\Sigma$, showing that (2) holds.  (3) holds because $B_n^2\subseteq B_{2n}$ for each $n$.

To see that (4) holds: if $n$ is fixed and $(g_k)_{k\in\omega}$ is an arbitrary sequence of elements of $G$, then each left translate $g_kB_n$ is meager in $(A_{n+1},\tau_0)$ (where $\tau_0$ is the topology defined in Section 3) by the second part of Lemma \ref{lemma_category}.  It follows that $A_{n+1}$ is not covered by any union of the form $\displaystyle\bigcup_{k\in\omega}g_kB_n$, hence neither is $\PL_+(I)$.  We conclude by Lemma \ref{lemma_standard} that $\PL_+(I)$ is not Polishable, and therefore has no Polish topology whatsoever by Lemma \ref{lemma_main}.





\end{proof}

\begin{cor:cor_main}  There is no Polish group topology on any of the following groups:
\begin{enumerate}
		\item $\PL_+(\mathbb{R})$;
		\item $\PLF_+(\mathbb{R})$; and
		\item $\PL_+(\mathbb{S}^1)$.
\end{enumerate}
\end{cor:cor_main}

\begin{proof}  First, we re-define the circle $\mathbb{S}^1$ to be the quotient space $\mathbb{R}/2\mathbb{Z}$ (as opposed to $\mathbb{R}/\mathbb{Z}$ as in the introduction) and re-define $\PL_+(\mathbb{S}^1)$ to be the subgroup of $\PL_+(\mathbb{R})$ consisting of those maps $f$ satisfying $f(x+2)=f(x)+2$ for all $x\in\mathbb{R}$.  This convenience will allow us to prove all three parts of the corollary simultaneously.  Clearly the old and the new definitions yield isomorphic groups.

So let $G$ be any of the groups $\PL_+(\mathbb{R})$, $\PLF_+(\mathbb{R})$, or $\PL_+(\mathbb{S}^1)$.  Suppose for a contradiction that $G$ carries some Polish group topology.  In cases (1) and (2) let $M=\mathbb{R}$; in case (3) let $M=\mathbb{S}^1=\mathbb{R}/2\mathbb{Z}$.  Note that $\PL_+(I)$ naturally embeds into $G$ via identification with the subgroup\\

\begin{center} $H=\{h\in G:h$ fixes $M\backslash[0,1]$ pointwise$\}$
\end{center}
\vspace{.3cm}

\noindent consisting of elements which are supported only on $[0,1]$.

For an element $g\in G$, let $C(g)$ denote the centralizer of $g$.  Each $C(g)$ is closed in $G$, as centralizers are closed in any Hausdorff topological group.  Set $X=\{g\in\PL_+(\mathbb{R}):g$ fixes $[0,1]$ pointwise$\}$.  It is straightforward to verify that $h\in H$ if and only if $h$ commutes with every element of $X$; in other words $H=\displaystyle\bigcap_{g\in X}C(g)$ and $H$ is closed in $G$.  This implies that $H\cong\PL_+(I)$ inherits a Polish group topology from $G$, contrary to Theorem \ref{thm_main}.
\end{proof}

\section{Related Results}

Recall that $\Homeo^{Lip}_+(I)$ is the subset of $\Homeo_+(I)$ consisting of maps $f$ which are both Lipschitz and have Lipschitz inverse, i.e. there is a constant $C$ such that $|f(x)-f(y)|<C|x-y|$ and $|f^{-1}(x)-f^{-1}(y)|<C|x-y|$ for all $x,y\in I$.  It is clear that $\Homeo^{Lip}_+(I)$ is a subgroup of $\Homeo_+(I)$.

Recall also that for a fixed $0<\epsilon<1$, $\Diff^{1+\epsilon}_+(I)$ denotes the subset of $\Diff_+(I)$ consisting of diffeomorphisms $f$ whose derivatives satisfy the H\"older condition with exponent $\epsilon$: that is, there is a constant $C$ such that $|f'(x)-f'(y)|<C|x-y|^\epsilon$ for all $x,y\in I$.  The fact that $\Diff^{1+\epsilon}_+(I)$ comprises a group (with no additional requirement on the inverse) may be deduced from the chain rule for derivatives.

\begin{thm:thm_also_1}  There is no Polish group topology on $\Homeo^{Lip}_+(I)$.
\end{thm:thm_also_1}

\begin{proof}  Let $\Sigma$ denote the class of Borel sets of $\Homeo^{Lip}_+(I)$ inherited as a subgroup of $\Homeo_+(I)$.  For each $n\in\omega$, let\\

\begin{center} $B_n=\{f\in\Homeo^{Lip}_+(I):|f(x)-f(x)|<n|x-y|$ and $|f^{-1}(x)-f^{-1}(y)|<n|x-y|$ for all $x,y\in I\}$,
\end{center}
\vspace{.3cm}

\noindent so $\Homeo^{Lip}_+(I)=\displaystyle\bigcup_{n\in\omega}B_n$.  Note that for $f\in\Homeo_+(I)$,\\

\begin{center} $f\in B_n~\leftrightarrow~\forall p, q\in (I\cap\mathbb{Q})~(p\neq q)~\rightarrow~ \left|\dfrac{f(p)-f(q)}{p-q}\right|<n~\wedge~\left|\dfrac{f^{-1}(p)-f^{-1}(q)}{p-q}\right|<n$,
\end{center}
\vspace{.3cm}

\noindent from which we see that $B_n$ is $G_\delta$ in $\Homeo_+(I)$, i.e. $B_n\in\Sigma$, and $\Homeo^{Lip}_+(I)$ is a $\mathbf{\Sigma}^0_3$ (or $G_{\delta\sigma}$) subset of $\Homeo_+(I)$.  Note each $B_n$ is symmetric and satisfies $B_n^2\subseteq B_{n^2}$; thus conditions (1), (2), and (3) of Lemma \ref{lemma_standard} are all satisfied.

It remains to check condition (4).  Fix $n\in\omega$ and let $(g_k)_{k\in\omega}$ be an arbitrary sequence of elements in $\Homeo^{Lip}_+(I)$.  We will use a diagonal argument to produce a bi-Lipschitz map $f$ such that $fg_k^{-1}\notin B_n$ for each $k$.  To simplify terminology: if $f\in\Homeo_+(I)$ and $J$ is a subinterval of $I$, say that $f$ is $C$-Lipschitz on $J$ if $|f(x)-f(y)|<C|x-y|$ whenever $x,y\in J$.  So $A_n$ is the class of maps $f$ for which $f$ and $f^{-1}$ are both $n$-Lipschitz on $I$.

Let $(J_k)_{k\in\omega}$ be a sequence of pairwise disjoint nonempty open subintervals of $I$.  Write $J_k=(a_k,b_k)$, and let $\ell_k=b_k-a_k$ denote the length of $J_k$.  For each interval $J_k$, define the partial homeomorphism $f\upharpoonright_{J_k}:J_k\rightarrow J_k$ by cases as follows.\\

\begin{enumerate}
		\item If $g_k$ is not $n$-Lipschitz on $J_k$ or if $g_k^{-1}$ is not $n$-Lipschitz on $g_k(J_k)$: let $f\upharpoonright_{J_k}$ be the identity map on $J_k$.\\
		\item If $g_k$ is $n$-Lipschitz on $J_k$ and $g_k^{-1}$ is $n$-Lipschitz on $g_k(J_k)$: Let $x_k=a_k+\frac{\ell_k}{n^2+1}$.  Let $f\upharpoonright_{J_k}$ be the increasing piecewise linear self-homeomorphism of $J_k$ which has exactly one break point at $x_k$, and maps $x_k$ to $f(x_k)=b_k-\frac{\ell_k}{n^2+1}$.  (See Figure A.)
\end{enumerate}
\vspace{.3cm}

\begin{center}
		\includegraphics{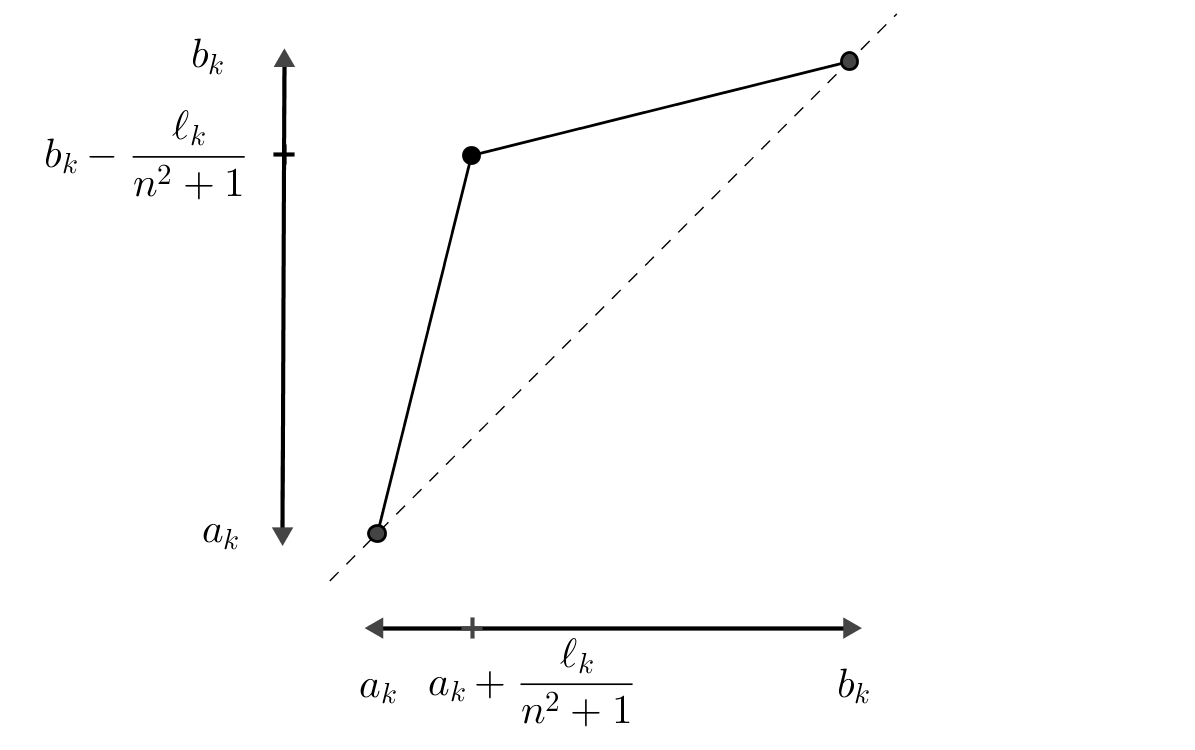}\\
		Figure A: Graph of $f\upharpoonright_{J_k}:J_k\rightarrow J_k$.
\end{center}
\vspace{.3cm}

Define $f$ to be the identity on any points where it is not yet defined; thus we obtain $f\in\Homeo_+(I)$.  We claim $f$ is $(n^2+1)$-Lipschitz on each interval $J_k$.  In the first case of our construction this is immediate.  In the second case, it follows because $f$ is piecewise linear with maximal slope $\frac{f(x_k)-a_k}{x_k-a_k}=\left(\frac{n^2\ell_k}{n^2+1}\right)/\left(\frac{\ell_k}{n^2+1}\right)=n^2<n^2+1$.  The piecewise linear inverse $f^{-1}$ actually has the same maximal slope on $J_k$, and hence is $(n^2+1)$-Lipschitz on $J_k$.  Since this holds for all $J_k$ and $f$ is the identity elsewhere, it is straightforward to deduce from the triangle inequality that $f$ is $(n^2+1)$-Lipschitz on all of $I$, i.e. $f\in B_{n^2+1}\subseteq\Homeo^{Lip}_+(I)$.

However, $fg_k^{-1}$ cannot lie in $B_n$ for any $k\in\omega$.  For if condition (1) in the construction holds, then $fg_k^{-1}$ coincides with $g_k^{-1}$ on $g(I_k)$ and $g_kf^{-1}$ coincides with $g_k$ on $J_k$, whence either $fg_k^{-1}$ or its inverse violates the $n$-Lipschitz condition.

On the other hand if condition (2) holds, then $fg_k^{-1}$ maps the points $g_k(a_k)$, $g_k(x_k)$ to a distance $\frac{n^2\ell_k}{n^2+1}$ away from one another.  Since $x_k-a_k=\frac{\ell_k}{n^2+1}$ and $g_k$ is $n$-Lipschitz on $J_k$, we have $g(x_k)-g(a_k)< n\cdot\frac{\ell_k}{n^2+1}=\frac{n\ell_k}{n^2+1}$.  So\\

\begin{center} $\dfrac{fg_k^{-1}(g_k(x_k))-fg_k^{-1}(g_k(a_k))}{g_k(x_k)-g_k(a_k)}>\left(\dfrac{n^2\ell_k}{n^2+1}\right)/\left(\dfrac{n\ell_k}{n^2+1}\right)=n$
\end{center}
\vspace{.3cm}

\noindent and therefore $fg_k^{-1}$ is not $n$-Lipschitz on $g_k(J_k)$.

\noindent Thus we have produced $f\in\Homeo^{Lip}_+(I)$ such that $fg_k^{-1}\notin B_k$ for each $k$, i.e. $f\notin\displaystyle\bigcup_{k\in\omega}B_kg_k$.  The fact that $(\Homeo^{Lip}_+(I),\Sigma)$ is non-Polishable now follows from Lemma \ref{lemma_standard}, and that it has no Polish topology from Lemma \ref{lemma_main}.
\end{proof}

\begin{thm:thm_also_2}  There is no Polish group topology on $\Diff^{1+\epsilon}_+(I)$.
\end{thm:thm_also_2}

\begin{proof}  We proceed similarly to the previous proof.  Take $\Sigma$ to be the Borel subsets of $\Diff^{1+\epsilon}_+(I)$ inherited as a subgroup of $\Homeo_+(I)$.  For $f\in\Homeo_+(I)$ and for an open subinterval $J$ of $I$, say that $f'$ is $C$-H\"older on $J$ if $|f'(x)-f'(y)|<C|x-y|^\epsilon$ for all $x,y\in J$.  Define\\

\begin{center}  $B_n=\{f\in\Diff^{1+\epsilon}_+(I):f'$ and $(f^{-1})'$ are $n$-H\"older on $I\}$.
\end{center}
\vspace{.3cm}

So $\Diff^{1+\epsilon}_+(I)=\displaystyle\bigcup_{n\in\omega}B_n$.  For $f\in\Diff_+(I)$ we have\\

\begin{center} $f\in B_n~\leftrightarrow~\forall p, q\in(I\cap\mathbb{Q})~(p\neq q)~\rightarrow~ \dfrac{|f'(p)-f'(q)|}{|p-q|^\epsilon}<n~ \wedge \dfrac{|(f^{-1})'(p)-(f^{-1})'(q)|}{|p-q|^\epsilon}<n$
\end{center}
\vspace{.3cm}

\noindent and therefore $B_n$ is $G_\delta$ in $\Diff_+(I)$ (where the latter is topologized as a Polish group with the $C^1$-metric).  $\Diff_+(I)$ injects continuously into $\Homeo_+(I)$, and therefore $B_n$ is Borel in $\Homeo_+(I)$ as well by the Lusin-Souslin theorem, i.e. $B_n\in\Sigma$.  So (1) and (2) in Lemma \ref{lemma_standard} hold.

To show that (3) holds, we must make a few comments.  First, we claim that if $f\in B_n$, then its derivative $f'$ is uniformly bounded above on $I$ by say $n+1$.  For if not, there would exist a point $x_0\in I$ for which $f'(x_0)>n+1$.  But by the H\"older condition, for every other $x\in I$, $|f'(x)-f'(x_0)|<n|x-x_0|^\epsilon\leq n$; whence $f'(x)>f'(x_0)-n>1$.  In other words $f'$ is bounded below strictly by $1$, which is clearly impossible for a self-diffeomorphism of the interval- this proves the claim.  So we have $||f'||_\infty\leq n+1$ for each $f\in B_n$, where $||f'||_\infty$ denotes the least upper bound of $f'$ on $I$.

Now it follows from the chain rule that if $f$ and $g$ are diffeomorphisms for which $f'$ and $g'$ are both $n$-H\"older, then the derivative of the product $(fg)'$ is $(||f'||_\infty+||g'||_\infty^{1+\epsilon})n$-H\"older.  This fact, combined with our remarks in the previous paragraph, implies that any product of elements $f$ and $g$ from $B_n$ will actually be $m_n$-H\"older where $m_n$ is the least integer greater than $[(n+1)+(n+1)^{1+\epsilon}]n$, i.e. $A_n^2\subseteq A_{m_n}$.  This shows that Lemma \ref{lemma_standard} (3) holds for the sequence $(B_n)$.

As in the previous proof, we achieve (4) by a diagonal argument, although considerably more technical care is required.  Let $(J_k)_{k\in\omega}$ be a sequence of disjoint nonempty open subintervals of $I$.  Write $J_k=(a_k,b_k)$ and set $\ell_k=b_k-a_k$.  Furthermore assume that the sequence $(J_k)$ is chosen in such a way that $\displaystyle\inf_{x\in J_k,y\in J_p}|x-y|\geq\max\{\ell_k,\ell_p\}$ for each pair of distinct integers $k$ and $p$.  We define the derivative $f'$ by cases as follows:

\begin{enumerate}
		\item If $g_k'$ is not $n$-H\"older on $J_k$ or $(g_k^{-1})'$ is not $n$-H\"older on $g(J_k)$: let $f'\upharpoonright_{J_k}$ be identically $1$.\\
		\item If $g_k'$ is $n$-H\"older on $J_k$ and $(g_k^{-1})'$ is $n$-H\"older on $g(J_k)$:  Let $x_k$ be the midpoint of $J_k$, and let $y_k=x_k+\dfrac{\ell_k}{2(n+1)^4}$.  Now let $f'$ be the piecewise linear map $J_k$ which maps both endpoints to $1$, whose only break points are $x_k$, $y_k$, and $x_k-(y_k-x_k)$, which maps $x_k$ to $f'(x_k)=1+(\ell_k)^\epsilon(n+1)^4$ and $y_k$ to $f'(y_k)=1-(\ell_k)^\epsilon$, and which is symmetric about the line $x=x_k$.  (See Figure B.)\\
\end{enumerate}
\vspace{.3cm}

\begin{center} \includegraphics{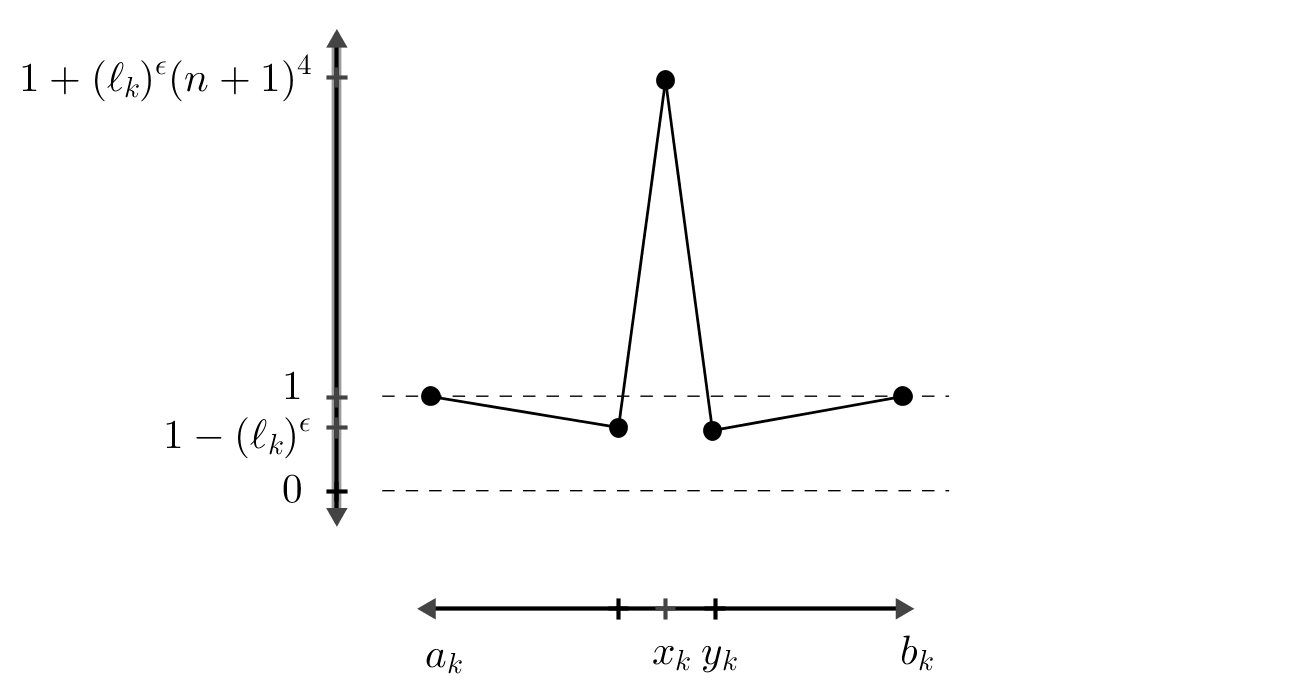}\\
Figure B: Graph of $(f')\upharpoonright_{J_k}:J_k\rightarrow J_k$.
\end{center}
\vspace{.3cm}

Now a straightforward area calculation shows that $\int_{a_k}^{b_k}f'(x)dx$ is equal to $\ell_k$, and thus the antiderivative $f$ of $f'$ which fixes $a_k$ is a self-diffeomorphism of $J_k$.  Letting $f$ be the identity wherever we have not yet defined it, $f'$ becomes continuous on $I$ and thus we have $f\in\Diff_+(I)$.  (To check continuity of the derivative at the endpoints of $I$, observe that $\displaystyle\lim_{k\rightarrow\infty}(\ell_k)^\epsilon=0$.)  On each interval $J_k$, the derivative $f'$ is piecewise linear and by inspection, the slopes of the linear segments are bounded above by $C(\ell_k)^{\epsilon-1}$, where $C=2((n+1)^4+1)(n+1)^4$.  This implies that for any $x,y\in J_k$, we have\\

\begin{center} $\dfrac{|f'(x)-f'(y)|}{|x-y|^\epsilon}=\dfrac{|f'(x)-f'(y)|}{|x-y|}\cdot|x-y|^{1-\epsilon}< C(\ell_k)^{\epsilon-1}\cdot(\ell_k)^{1-\epsilon}=C$.
\end{center}
\vspace{.3cm}

So $f'$ is $C$-H\"older on each $J_k$.  

To see that $f'$ is $C$-H\"older continuous on all of $I$, let $x,y\in I$ be arbitrary.  If neither $x$ nor $y$ lies in some subinterval $J_k$, then $f'(x)=f'(y)=1$ and $f'$ obviously satisfies the H\"older condition at these points.  So assume $x\in J_k$ for some integer $k$.  Assume by permuting $x$ and $y$ if necessary that if $y\in J_p$ for some integer $p$, then $\ell_k\geq\ell_p$.  Now either $f'(x)\geq f'(y)$ or $f'(x)<f'(y)$.  In the first case, set $z=y_k$.  Since $y\in J_p$ and $J_p$ is a smaller interval than $J_k$, we have $f'(y)\geq f'(y_p)\geq f'(y_k)=f'(z)$, and therefore $|f'(x)-f'(z)|=f'(x)-f'(z)\geq f'(x)-f'(y)=|f'(x)-f'(y)|$.  In the second case, set $z=x_k$; then since $y\in J_p$, we have $f'(y)\leq f'(x_p)\leq f'(x_k)=f(z)$, whence $|f'(x)-f'(z)|=f'(z)-f'(x)\geq f'(y)-f'(x)=|f'(x)-f'(y)|$.  In both cases we have $|f'(x)-f'(z)|\geq|f'(x)-f'(y)|$.  But $x,z\in J_k$, and hence $|x-z|\leq\ell_k\leq|x-y|$ since we chose our intervals $J_k$ and $J_p$ to be at least at a distance $\ell_k$ apart from one another.  Thus the fact that $f'$ satisfies the $C$-H\"older condition at $x$ and $y$ now follows from the fact that it does so at $x$ and $z$ in $J_k$.  Thus $f'$ is $C$-H\"older on all of $I$, and our constructed $f$ indeed lies in $\Diff_+^{1+\epsilon}(I)$.

We claim that for each $k\in\omega$, either the derivative of $fg_k^{-1}$ is not $n$-H\"older on the interval $g(J_k)$ or else the derivative of $(fg_k^{-1})^{-1}=g_kf^{-1}$ is not $n$-H\"older on the interval $J_k$.  In case 1 of our construction, we have either $fg_k^{-1}=g_k^{-1}$ on $g(J_k)$ or $g_kf^{-1}=g_k$ on $J_k$, from which the conclusion follows.  So we consider case 2 of the construction.  Since $g_k'$ is $n$-H\"older on $J_k$, it is also bounded above by $n+1$ (see the argument in the third paragraph of this proof), and thus $g_k$ is $(n+1)$-Lipschitz on $J_k$.  It follows that\\

\begin{center} $|g_k(x_k)-g_k(a_k)|\leq(n+1)\ell_k$.
\end{center}
\vspace{.3cm}

On the other hand, we have\\

\begin{align*}
|(fg_k^{-1})'(g_k(x_k))-(fg_k^{-1})'(g_k(a_k))| &= \left|\dfrac{f'(x_k)}{g_k'(x_k)}-\dfrac{f'(a_k)}{g_k'(a_k)}\right|\\
&\geq \dfrac{1}{|g_k'(x_k)|}|f'(x_k)-f'(a_k)|-|f'(a_k)|\left|\dfrac{1}{g_k'(x_k)}-\dfrac{1}{g_k'(a_k)}\right|.
\end{align*}
\vspace{.3cm}

Now looking at the terms in the last line above, note that by our construction we have $|f'(x_k)-f'(a_k)|=(\ell_k)^\epsilon(n+1)^4$.  On the other hand $|g_k'(x_k)|$ is bounded above by $n+1$; $|f'(a_k)|$ is exactly $1$; and by the $n$-H\"older condition applied to $g_k^{-1}$,\\

\begin{align*}
\left|\dfrac{1}{g_k'(x_k)}-\dfrac{1}{g_k'(a_k)}\right| &= |(g_k^{-1})'(g_k(x_k))-(g_k^{-1})'(g_k(a_k))|\\
&\leq n|g_k(x_k)-g_k(a_k)|^\epsilon\\
&\leq n[(n+1)\ell_k]^\epsilon\\
&< (n+1)^2(\ell_k)^\epsilon.
\end{align*}
\vspace{.3cm}

It follows from the crude estimates above that\\

\begin{center} $|(fg_k^{-1})'(g_k(x_k))-(fg_k^{-1})'(g_k(a_k))|\geq\dfrac{(\ell_k)^\epsilon(n+1)^4}{n+1}-(n+1)^2(\ell_k)^\epsilon=n(n+1)^2(\ell_k)^\epsilon$
\end{center}
\vspace{.3cm}

\noindent and therefore\\

\begin{center} $\dfrac{|(fg_k^{-1})'(g_k(x_k))-(fg_k^{-1})'(g_k(x_a))|}{|g_k(x_k)-g_k(a_k)|^\epsilon}\geq\dfrac{n(n+1)^2(\ell_k)^\epsilon}{(n+1)^\epsilon(\ell_k)^\epsilon}>n(n+1)>n$.
\end{center}
\vspace{.3cm}

So $(fg_k^{-1})'$ is not $n$-H\"older and $f\notin B_ng_k$ for any $k\in\omega$.  This shows condition (4) of Lemma \ref{lemma_standard} holds (and thus all conditions of the lemma hold) for the sequence $(B_n)$ in $\Diff_+^{1+\epsilon}(I)$, and we conclude that $(\Diff_+^{1+\epsilon}(I),\Sigma)$ is not Polishable.  So $\Diff_+^{1+\epsilon}(I)$ has no Polish group topology by Lemma \ref{lemma_main}.
\end{proof}

\begin{cor:cor_also}  There is no Polish group topology on either of the following groups:
\begin{enumerate}
		\item $\Homeo_+^{Lip}(\mathbb{R})$; and
		\item $\Homeo_+^{Lip}(\mathbb{S}^1)$.
\end{enumerate}
\end{cor:cor_also}

\begin{proof}  As in the proof of Corollary \ref{cor_main}, for convenience we take $\mathbb{S}^1=\mathbb{R}/2\mathbb{Z}$.  Let $G$ be either group listed above.  Then $\Homeo_+^{Lip}(I)$ may be naturally identified with the subgroup $H$ of $G$ consisting of those elements which fix the complement of $[0,1]$.  Therefore we may proceed exactly as in the proof of Corollary \ref{cor_main} to show that $\Homeo_+^{Lip}(I)$ in fact embeds as a closed subgroup of $G$, whenever $G$ is given a Hausdorff group topology.  Then it follows from Theorem \ref{thm_also_1} that $G$ cannot be made Polish, else $H$ would be Polish as well.
\end{proof}

\begin{rem} \label{rmk_extend}  Of course the group $\Diff_+^{1+\epsilon}(\mathbb{S}^1)$ (defined in the obvious way) holds great interest and we would like to show that it has no compatible Polish group topology.  Unfortunately the direct argument used to obtain both Corollaries \ref{cor_main} and \ref{cor_also} does not appear to apply in this case, as $\Diff_+^{1+\epsilon}(I)$ may not be identified with the subgroup of $\Diff_+^{1+\epsilon}(\mathbb{S}^1)$ consisting of elements which are supported only on a proper sub-arc of $\mathbb{S}^1$.  This is simply because the diffeomorphisms in $\Diff_+^{1+\epsilon}(I)$ may have any positive derivative at the endpoints $0$ and $1$; but diffeomorphisms supported on some sub-arc $[a,b]$ must have derivative $1$ at $a$ and $b$ to ensure continuity of the derivative.  Despite this difficulty, the ingredients to the proof of Theorem \ref{thm_also_2} all hold if $I$ is replaced with $\mathbb{S}^1$; in particular, Lemmas \ref{lemma_kallman}, \ref{lemma_hayes}, and \ref{lemma_main} are all still true (see \cite{kallman_1986a} and \cite{hayes_1997a} respectively for the proofs), and the construction of the map $f$ in the proof of Theorem \ref{thm_also_2} still makes sense and will still yield a H\"older-continuous function which misses each set $g_kB_n$, as long as the subintervals $(J_k)$ of $\mathbb{S}^1$ are chosen to be sufficiently separated.  So although we omit the details here, there appear to be no obstructions in extending the proof of Theorem \ref{thm_also_2} to apply also to the group $\Diff_+^{1+\epsilon}(\mathbb{S}^1)$.
\end{rem}

As a final remark, we observe that our results hold for each of these groups of transformations we have considered even if we allow the orientation-reversing ones to be included.  In what follows, the omission of the subscript $+$ indicates that we mean the full group of homeomorphisms satisfying a given property, regardless of whether it is orientation-preserving or reversing.  So, for instance, $\PL(I)$ is just the group of all piecewise linear homeomorphisms of $I$.

\begin{cor}  There is no Polish group topology on any of the following groups:
\begin{enumerate}
		\item $\PL(I)$;
		\item $\Homeo^{Lip}(I)$;
		\item $\Diff^{1+\epsilon}(I)$;
		\item $\PL(\mathbb{R})$;
		\item $\PLF(\mathbb{R})$;
		\item $\Homeo^{Lip}(\mathbb{R})$;
		\item $\PL(\mathbb{S}^1)$; and
		\item $\Homeo^{Lip}(\mathbb{S}^1)$.
\end{enumerate}
\end{cor}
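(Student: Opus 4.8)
The plan is to exhibit, inside each full group $G$ in the list, a closed copy of one of the orientation-preserving groups already shown to admit no Polish topology; since a closed subgroup of a Polish group is itself Polish, this forces $G$ to admit no Polish group topology. In every case the orientation-preserving subgroup $G_+$ of increasing homeomorphisms has index $2$ in $G$, and I would locate the desired closed subgroup by two complementary mechanisms.

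For the three interval groups $\PL(I)$, $\Homeo^{Lip}(I)$, and $\Diff^{1+\epsilon}(I)$, I would show that the index-$2$ subgroup $G_+\in\{\PL_+(I),\Homeo^{Lip}_+(I),\Diff^{1+\epsilon}_+(I)\}$ is in fact \emph{clopen} under any Hausdorff group topology $\tau$ on $G$. First note that each such $G$ satisfies the richness hypothesis of Lemmas \ref{lemma_intermediate}, \ref{lemma_kallman}, and \ref{lemma_hayes}, since the perturbations $g_U$ may be chosen orientation-preserving. The proofs of Lemmas \ref{lemma_intermediate} and \ref{lemma_kallman} use only that the elements of $G$ are bijections, so they extend verbatim to subgroups of $\Homeo(I)$. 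The proof of Lemma \ref{lemma_hayes} extends as well: in the verification that $f\mapsto f(x)$ is continuous at the identity, no orientation-reversing map can lie in the neighborhood $W=G\setminus F$, because a decreasing homeomorphism cannot carry a point of $V_1=(x-\epsilon,x-\delta)$ back into $\overline{V_1}$ and a point of $V_2=(x+\delta,x+\epsilon)$ back into $\overline{V_2}$ simultaneously. Hence (Arens's theorem \cite{arens_1946a} applies equally to subgroups of $\Homeo(I)$) the topology $\tau$ contains the compact-open topology, and every evaluation at an interior point is $\tau$-continuous. Fixing $a=\tfrac14$ and $b=\tfrac34$, the map $f\mapsto f(b)-f(a)$ is then continuous and never zero, being positive exactly on the increasing maps; thus $G_+=\{f\in G:f(b)-f(a)>0\}$ is clopen. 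A Polish $\tau$ would therefore restrict to a Polish group topology on $G_+$, contradicting Theorem \ref{thm_main}, \ref{thm_also_1}, or \ref{thm_also_2}, respectively.

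For the five remaining groups over $\mathbb{R}$ and $\mathbb{S}^1$ I would instead reuse the support/centralizer argument of Corollaries \ref{cor_main} and \ref{cor_also}. Writing $M\in\{\mathbb{R},\mathbb{S}^1\}$ (with $\mathbb{S}^1=\mathbb{R}/2\mathbb{Z}$, so that $[0,1]$ is a proper sub-arc with nonempty open complement) and $H=\{h\in G:h\text{ fixes }M\setminus[0,1]\text{ pointwise}\}$, let $X$ be the set of orientation-preserving elements of $G$ that fix $[0,1]$ pointwise; then $H=\bigcap_{g\in X}C(g)$ is an intersection of centralizers, hence closed in any Hausdorff group topology. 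The characterization $H=\bigcap_{g\in X}C(g)$ remains valid when $h$ is allowed to be orientation-reversing, since by the extended Lemma \ref{lemma_intermediate} any $h$ commuting with every $g\in X$ must fix the open complement of $[0,1]$ pointwise, which a decreasing map cannot do on a nondegenerate arc. An element supported on $[0,1]$ fixes the endpoints $0$ and $1$, is therefore orientation-preserving, and---for the piecewise-linear and bi-Lipschitz classes, where no boundary-derivative matching is required---ranges over all of $\PL_+(I)$ or $\Homeo^{Lip}_+(I)$; thus $H\cong\PL_+(I)$ or $H\cong\Homeo^{Lip}_+(I)$ exactly. A Polish topology on $G$ would restrict to $H$, contradicting Theorem \ref{thm_main} or \ref{thm_also_1}. (The same support argument, applied to a proper subinterval such as $[0,\tfrac12]$, also covers $\PL(I)$ and $\Homeo^{Lip}(I)$, although the index-$2$ argument above already handles them.)

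I expect the main obstacle to be the $\Diff^{1+\epsilon}(I)$ case, for which the support/centralizer mechanism is unavailable: as noted in Remark \ref{rmk_extend}, a $C^{1+\epsilon}$ diffeomorphism supported on a subinterval is forced to have derivative $1$ at the matching endpoint and so cannot recover all of $\Diff^{1+\epsilon}_+(I)$. This is exactly why the index-$2$ mechanism is required there, and the delicate point is the extension of Lemma \ref{lemma_hayes} to the presence of orientation-reversing elements. The one genuinely new ingredient is the order-geometric observation that a single decreasing homeomorphism cannot simultaneously return points of two flanking intervals $V_1<x<V_2$ to themselves; everything else is a direct application of results already established in Sections 2--5.
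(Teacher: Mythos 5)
Your proposal is correct and follows essentially the same route as the paper: the paper's (one-sentence) proof likewise observes that any Hausdorff group topology on each of these full groups must contain the compact-open topology, so that the orientation-preserving subgroup is closed, and then invokes Theorems \ref{thm_main}, \ref{thm_also_1}, \ref{thm_also_2} and Corollaries \ref{cor_main}, \ref{cor_also}. The only differences are cosmetic: you spell out the extension of Lemmas \ref{lemma_intermediate}--\ref{lemma_hayes} to the orientation-reversing setting (which the paper delegates to the greater generality of the cited sources of Kallman and Hayes), and for the $\mathbb{R}$ and $\mathbb{S}^1$ cases you pass directly to a closed copy of the interval group via the centralizer argument of Corollaries \ref{cor_main} and \ref{cor_also} rather than first splitting off the orientation-preserving subgroup.
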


\begin{proof}  In each case, by the results of \cite{kallman_1986a} and \cite{hayes_1997a}, any Hausdorff group topology on the group in question must contain the compact-open topology.  In that case the subgroup consisting of orientation-preserving maps must be closed; the corollary now follows from the main results of this article.
\end{proof}

\end{document}